\title{On minimal 3-folds of general type with maximal pluricanonical section index}
\author{Meng Chen}
\dedicatory{Dedicated to Prof. Ngaiming Mok on his sixtieth birthday }
\address{\rm School of Mathematical Sciences \& Shanghai Centre for Mathematical Sciences, Fudan University, Shanghai 200433, China}
\email{mchen@fudan.edu.cn}
\newcommand{\bQ}{{\mathbb Q}}
\newcommand{\bP}{{\mathbb P}}
\newcommand{\roundup}[1]{\lceil{#1}\rceil}
\newcommand{\rounddown}[1]{\lfloor{#1}\rfloor}
\newcommand\lrw{\longrightarrow}
\newcommand\OO{{\mathcal{O}}}
\newcommand\caL{{\mathcal{L}}}
\newcommand\bZ{{\mathbb{Z}}}
\newcommand{\lsgeq}{\succcurlyeq}
\newcommand{\lsleq}{\preccurlyeq}
\newtheorem{thm}{Theorem}[section]
\newtheorem{lem}[thm]{Lemma}
\newtheorem{cor}[thm]{Corollary}
\newtheorem{prop}[thm]{Proposition}
\theoremstyle{definition}
\newtheorem{rem}[thm]{Remark}
\theoremstyle{remark}
\begin{document}
\begin{abstract} 
Let $X$ be a minimal 3-fold of general type. The pluricanonical section index $\delta(X)$ is defined to be the minimal integer $m$ so that $P_{m}(X)\geq 2$. According to Chen-Chen, one has either $1\leq \delta(X)\leq 15$ or $\delta(X)=18$.  This note aims to intensively study those with maximal such index. A direct corollary is that the $57$th canonical map of every minimal 3-fold of general type is stably birational. 
\end{abstract}
\maketitle

%%%%%%%%%
\pagestyle{myheadings}
\markboth{\hfill M. Chen\hfill}{\hfill Threefolds of general type with maximal ps-index\hfill}
\numberwithin{equation}{section}
%%%%%%%%%%%%
%\tableofcontents

\section{\bf Introduction} 

One main task of birational geometry is to study the behavior of pluricanonical maps of projective varieties. In this paper, we restrict our interest to minimal projective 3-folds of general type. Without loss of generality, we may always study a minimal variety of general type over any algebraically closed field $k$ of characteristic $0$. 

Let $X$ be a minimal projective $n$-fold of general type. Traditionally $X$ is always assumed to be $\bQ$-factorial with at worst terminal singularities.  We always denote by $\varphi_{m,X}$ the $m$-th canonical map corresponding to $|mK_X|$, where $K_X$ is a canonical divisor of $X$.  The {\it canonical stability index of $X$} is defined as
$$r_s(X)=\text{min}\{m\in \bZ_{>0}|\ \varphi_{l,X}\ \text{is birational for all}\ l\geq m\}.$$
The {\it $n$-th canonical stability index} $r_n$ is defined as 
$$r_n=\text{sup}\{r_s(X)|\ X\ \text{is a minimal n-fold of general type}\}.$$
Such number $r_n$ has the fundamental importance in explicit birational geometry (see, for instance, Hacon--McKernan \cite[Problem 1.5]{H-M06}). Here are the main results about $r_n$:
\begin{itemize}
\item[$\diamond$] $r_1=3$, as a well-known fact.

\item[$\diamond$] $r_2=5$ according to Bombieri \cite{Bom73}.

\item[$\diamond$] $r_n<+\infty$ for all $n\geq 3$ by Hacon--McKernan \cite{H-M06} and Takayama \cite{Tak06}, independently. 

\item[$\diamond$] $r_3\leq 61$ by Chen--Chen \cite{EXPI,EXPII,EXPIII}. 
\end{itemize}

With regard to the value of $r_3$, Hacon--McKernan \cite[Question 1.6]{H-M06} asked if $r_3=27$ is true.  So far there is no known minimal 3-fold $X$ satisfying $r_s(X)>27$.  In this short note, we are going to improve the known upper bound for $r_3$. 

Let $X$ be a minimal 3-fold of general type. As in Chen--Chen \cite{EXPIII}, the {\it pluricanonical section index }(in short, {\it ps-index}) is defined as follows:
$$\delta(X)=\text{min}\{m\in \bZ_{>0}|\ P_m(X)\geq 2\}.$$
By Chen--Chen \cite[Theorem 1.4]{EXPIII}, we know that either $1\leq \delta(X)\leq 15$ or $\delta(X)=18$.   {}From our experience in studying the pluricanonical  birationality, those 3-folds with largest ps-idex are the main obstacle for us to get better estimation of $r_3$. Thus we concentrate on studying those with maximal ps-index. Here is our main result: 

\begin{thm}\label{57} Let $X$ be a minimal 3-fold of general type with $\delta(X)=18$. Then $r_s(X)\leq 57$.
\end{thm}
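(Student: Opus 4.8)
The plan is to show that for a minimal 3-fold $X$ of general type with $\delta(X)=18$, the canonical map $\varphi_{m,X}$ is birational for all $m \geq 58$. Since $\delta(X)=18$ means $P_m(X) \leq 1$ for all $m \leq 17$ while $P_{18}(X) \geq 2$, the variety is extremely ``rigid'' at low pluricanonical levels, and this forces the canonical volume $\Vol(X) = K_X^3$ to be very small. The starting point will be to recall the structure theory from Chen--Chen: with $\delta(X)=18$, the invariants $(P_1,\dots,P_{17},P_{18},\dots)$ are tightly constrained, and the Reid--Riemann--Roch basket data together with the plurigenus formula pin down the possible baskets of singularities and hence a sharp lower bound $K_X^3 \geq \beta_0$ for some explicit small rational number $\beta_0$. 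I expect that one can derive this lower bound directly by combining the vanishing $P_m=0$ or $P_m=1$ conditions with the Riemann--Roch inequalities, isolating a finite list of candidate baskets.

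Next I would set up the standard framework for pluricanonical birationality. After replacing $X$ by a smooth model $\pi\colon X' \to X$ and fixing $\varphi_{18}$, the image has dimension $d \in \{1,2,3\}$; I would handle each fibration case separately. In each case, pick a sufficiently divisible $m_0$ (a multiple of $18$, or related to $\delta$) so that $|m_0 K_X|$ is base-point free in the relevant sense and induces a fibration $f\colon X' \to B$. One then runs the now-standard argument of Chen--Chen: for a general fiber $F$ of $f$ (a curve or a surface of general type), one bounds $\Vol(F)$, the canonical genus or $(K^2,p_g)$ data of $F$, and uses these to produce an effective inequality of the form $\varphi_{m,X}$ is birational once $mK_X$ separates points on and between fibers. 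The key technical tool is the inequality comparing $K_X|_F$ with $K_F$ and the use of $\xi = (K_X \cdot C)$ for a generic irreducible curve $C$ in a fiber, leading to a numerical criterion: $\varphi_{m}$ is birational provided $(m-1-\frac{1}{\text{something}})\xi > 2$ together with separation of the generic irreducible elements of the moving part.

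The main obstacle, as the paper itself flags, is precisely the worst-case fibration geometry when $d=1$: a fibration over a curve whose general fiber $F$ is a surface of general type with the smallest possible invariants (typically $(K_F^2, p_g(F))$ at the bottom of the allowed range), combined with a small fiber-direction intersection number $\xi$. Here the multiplicative loss forces the threshold to its maximum, and I expect the number $57$ to emerge exactly from optimizing over this case, i.e.\ the minimal $m$ for which the separation inequality holds uniformly is $m=58$ failing and $m=57$ being the last possibly non-birational level, or equivalently the criterion guarantees birationality for $m \geq 58$ in all subcases and the bound $r_s(X)\le 57$ follows. To make this rigorous I would need the sharp lower bounds on $\xi$ and on the fiber invariants coming from the $\delta(X)=18$ constraint, and then verify that plugging these into the birationality criterion yields exactly the stated numerology. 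The delicate part is ensuring that the bound is attained simultaneously across all three image-dimension cases so that no case produces a stability index exceeding $57$; I would treat $d=2$ and $d=3$ first (they are easier and give smaller bounds) and reserve the full optimization for $d=1$.
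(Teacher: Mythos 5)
Your outline reproduces the generic Chen--Chen framework but misses the specific mechanism that makes the bound $57$ (rather than $59$ or $61$) attainable, and this is precisely where the content of the theorem lies. First, a structural point: since $P_{18}(X)=2$, the image of $\varphi_{18}$ is automatically a curve ($\cong\bP^1$ because $q(X)=0$), so there is no case division over $d\in\{1,2,3\}$; the only fibration in play is $f:X'\to\bP^1$ with general fiber $F$ a surface with $p_g(F)=1$. Running the standard machinery on this single fibration --- the lower bound on $\xi$, the comparison $\pi^*(K_X)|_F\geq\tilde\beta\sigma^*(K_{F_0})$, and the numerical criterion $\alpha_m>2$ --- yields only $\varphi_m$ birational for $m\geq 59$ (this is exactly the paper's intermediate corollary, which uses $\caL^2\geq 1/195$; note that even this step needs an integrality trick, namely that $r_X(\pi^*(K_X)|_F)^2\in\bZ$ with $r_X=2340$, to upgrade a strict inequality $\caL^2>11/2340$ to $\caL^2\geq 12/2340$). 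The descent to $57$ requires bringing $|24K|$ and $|36K|$ into the picture simultaneously: one splits on whether $|18K|$ and $|24K|$ are composed of the same pencil, and in the harder case splits further on the dimension of the image $V_{36}$ of the restriction $H^0(X',M_{36})\to H^0(F,M_{36}|_F)$. When $\dim V_{36}\geq 5$ one improves the bound on $\caL^2$ to roughly $1/172.8$ or $1/171$ via Clifford's theorem on the curve $C=M_{24}|_F$; when $\dim V_{36}\leq 4$ one extracts an extra moving divisor from $|M_{36}-F|$ and runs the Kawamata--Viehweg restriction argument with $F+F_1$ or $F+S_{-1}$ attached, which is a genuinely different route to birationality. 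Your proposal contains no hint of this three-system interplay, and without it the ``optimization over the worst case'' you describe simply does not reach $57$.

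There is also a definitional slip you should fix: to conclude $r_s(X)\leq 57$ you must prove $\varphi_{m,X}$ birational for \emph{all} $m\geq 57$, whereas your closing paragraph aims at birationality for $m\geq 58$ and treats $m=57$ as ``the last possibly non-birational level''; that would only give $r_s(X)\leq 58$. Finally, the invariants you propose to re-derive ($K_X^3=\frac{1}{1170}$, the basket, $P_{24}=3$, $P_{36}=8$, $p_g(F)=1$) are quoted from Chen--Chen \cite{EXPIII} rather than re-proved, which is fine, but they are inputs to the argument, not its engine.
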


Together with Chen--Chen \cite[Theorem 6.2]{EXPIII}, Theorem \ref{57} directly implies the following:
\begin{cor}\label{cc} One has $r_3\leq 57$.
\end{cor}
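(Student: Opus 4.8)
The plan is to study a minimal 3-fold $X$ with $\delta(X)=18$ and to show that $\varphi_{l,X}$ is birational for all $l\geq 58$, which together with the bound $r_s(X)\leq 57$ would need to be sharpened down to exactly $57$. The starting point is that $\delta(X)=18$ means $P_m(X)\leq 1$ for all $m\leq 17$ and $P_{18}(X)\geq 2$; this is an extremely rigid numerical situation. I would first extract from $\delta(X)=18$ the precise shape of the initial plurigenera and, via the Riemann--Roch formula for 3-folds (Reid's formula involving the basket of terminal singularities and $K_X^3$), pin down the basket $B$ of singularities and the value of the canonical volume $K_X^3$. The philosophy of Chen--Chen \cite{EXPIII} is that the largest ps-index forces $K_X^3$ to be very small (a tiny positive rational number with controlled denominator), so I expect essentially one, or a very short list, of admissible baskets to survive.

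Once $K_X^3$ and the basket are determined, I would run the standard pluricanonical birationality machinery of Chen--Chen. Fix $m_0$ with $P_{m_0}(X)\geq 2$ (here $m_0=18$ is the smallest such, but one also has $P_{m_0}\geq 2$ for the relevant multiples), take the rational map $\varphi_{m_0}$ and, after birational modification $\pi:X'\to X$, set up a pencil or net whose general member gives a fibration. The two cases to separate are: (i) $\varphi_{m_0}$ maps $X'$ onto a curve, giving a fibration $f:X'\to B$ whose general fiber $F$ is a smooth surface of general type; and (ii) the image has dimension $\geq 2$. In case (i) one analyzes $\sigma=(K_F)_F^2$ and the geometric genus of $F$ and applies the key inequality $K_X^3\geq$ (a function of $m_0$, $\sigma$, $p_g(F)$) to constrain $F$, then uses the restriction $K_{X'}|_F$ together with Kawamata--Viehweg vanishing to show that $|lK_{X'}|$ separates general fibers and restricts to a birational system on $F$ for $l\geq 58$.

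The technical heart, and the step I expect to be the main obstacle, is establishing birationality of $\varphi_{l,X}$ for $l$ just above the target bound, i.e. showing that once $l\geq 58$ the linear system $|lK_X|$ simultaneously (a) separates different generic fibers of the $\varphi_{m_0}$-induced fibration, (b) separates two generic points on a single fiber, and (c) distinguishes the fibration-direction. This requires a careful application of Kawamata--Viehweg vanishing to a $\bQ$-divisor of the form $lK_{X'}-\pi^*K_X-(\text{fractional part})$, reducing birationality on $X'$ to birationality of an adjoint system on the surface $F$ and then on a curve $C\subset F$; the delicate point is that with $\delta(X)=18$ the available ``positive part'' $K_{X'}$ is numerically very weak, so one must choose the coefficients and the auxiliary integer optimally to make the restricted degree exceed the Bombieri-type threshold on the curve. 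I would isolate the worst-case basket, compute the effective multiplicity $\mu$ of $K_{X'}|_F$, and check that $l=58$ already guarantees $(l-1-\tfrac{1}{\mu})\,\xi>2$ on the relevant curve, where $\xi=(\pi^*K_X\cdot C)$; verifying this inequality for the extremal numerical type is where the real work lies. Finally, combining Theorem \ref{57} with Chen--Chen \cite[Theorem 6.2]{EXPIII}, which handles all $X$ with $\delta(X)\leq 15$, yields $r_3\leq 57$ as in Corollary \ref{cc}.
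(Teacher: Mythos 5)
Your closing step is essentially the paper's proof of this corollary: deduce $r_3\leq 57$ from Theorem \ref{57} together with \cite[Theorem 6.2]{EXPIII} for the case $\delta(X)\leq 15$. But even at that level there is a concrete gap: the two cases $\delta(X)\leq 15$ and $\delta(X)=18$ are not a priori exhaustive. The paper must also invoke \cite[Theorem 1.4(2)]{EXPIII} to rule out $\delta(X)=16$ and $\delta(X)=17$; without citing that exclusion your case division does not cover all minimal $3$-folds of general type.

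More seriously, the bulk of your proposal is a sketch of how Theorem \ref{57} itself would be proved, and that sketch would not reach the bound. First, you aim to prove birationality of $\varphi_{l,X}$ for $l\geq 58$, which does not give $r_s(X)\leq 57$; you note the mismatch but never resolve it. Second, the machinery you describe --- a single induced fibration from $\varphi_{18}$, the key inequality for $\xi=(\pi^*(K_X)|_F\cdot C)$, and Kawamata--Viehweg vanishing down to a curve --- is precisely the argument of \cite[Theorem 6.1]{EXPIII}, which only yields $r_s(X)\leq 61$; even with the refined bound $(\pi^*(K_X)|_F)^2\geq \frac{1}{195}$ coming from the Cartier-index integrality (Lemma \ref{integ} and Lemma \ref{es}), this single-system route plateaus at $m\geq 59$, as the paper itself records. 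The improvement to $57$ rests on an ingredient absent from your sketch: the simultaneous use of $|18K|$, $|24K|$ and $|36K|$, namely the dichotomy of whether $|18K|$ and $|24K|$ are composed of the same pencil (Theorem \ref{same}) and, when they are not, the analysis of the restriction space $V_{36}\subseteq H^0(F,M_{36}|_F)$ according to $\dim V_{36}\geq 5$ or $\leq 4$ (Theorems \ref{v5} and \ref{v4}), the latter requiring a direct birationality argument with the auxiliary system $|M_{36}-F|$ rather than a volume estimate. A minor further point: since $P_{18}(X)=2$, the map $\varphi_{18}$ always has one-dimensional image, so your ``case (ii)'' (image of dimension $\geq 2$) never occurs here.
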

\medskip

It is interesting to ask what the exact value of  $r_3$ is. However, it seems to be very difficult to get an answer of this since the existence with $11\leq \delta\leq18$ has been in suspense. Theoretically it is clear from Chen-Chen \cite{EXPIII} that any minimal 3-fold $X$ with  $57\leq r_s(X)\leq 61$ must have $\delta(X)=18$.  Thus to study those with maximal ps-index is a very natural choice. It is known that every minimal 3-fold with $\delta=18$ has an induced fibration from $|18K|$ whose general fiber is a surface of general type with $p_g=1$.  A feature of this paper is that three maps $\varphi_{18}$, $\varphi_{24}$ and $\varphi_{36}$ and their mutual actions are considered simultaneously to prove the above mentioned main theorem.  
\bigskip

Throughout we will use the following symbols:
\begin{itemize}
\item[$\diamond$] ``$\sim$'' denotes linear equivalence or ${\mathbb Q}$-linear equivalence;
\item[$\diamond$] ``$\equiv$'' denotes numerical equivalence;
\item[$\diamond$] ``$|M_1|\lsgeq |M_2|$'' (or, equivalently,  ``$|M_2|\lsleq |M_1|$'') means, for linear systems $|M_1|$ and $|M_2|$ on a variety,
$$|M_1|\supseteq|M_2|+\text{(fixed effective divisor)}.$$
\item[$\diamond$] ``$D\leq D'$'' means that $D'-D$ is linearly (or ${\mathbb Q}$-linearly) equivalent to an effective divisor (or effective $\bQ$-divisor)  subject to the context for two divisors (or $\bQ$-divisors) $D$ and $D'$.
\end{itemize}

\section{\bf Preliminaries}

\subsection{Convention}  Let $|D|$ be any linear system of positive dimension on a normal projective variety $Z$. 
\begin{itemize}
\item[(1)]  Denote by $\text{Mov}|D|$ the moving part of $|D|$. 
We say that $|D|$ is {\it composed of a pencil} if $\dim\overline{Im(\Phi_{\text{Mov}|D|})}=1$. 

\item[(2)] A {\it generic irreducible element of $|D|$} means a general member of $\text{Mov}|D|$ when $|D|$ is not composed of a pencil or, otherwise, an irreducible component in a general member of $\text{Mov}|D|$.  

\item[(3)] Let $S_1$, $S_2$ be two different irreducible and reduced divisors which are not contained in the fixed locus of $|D|$. If 
$$\overline{\Phi_{|D|}(S_1)}\not\subseteq \overline{\Phi_{|D|}(S_2)}\ \text{and}\ \overline{\Phi_{|D|}(S_2)}\not\subseteq \overline{\Phi_{|D|}(S_1)},$$ we say that {\it $|D|$ can distinguish $S_1$ and $S_2$}. 
\end{itemize}

\subsection{Induced fibration from $\varphi_{m_0}$} \label{m0}

Let $X$ be a minimal projective 3-fold of general type on which $P_{m_0}(X)\geq 2$ for an integer $m_0>0$.  {}Fix an effective Weil divisor $K_{m_0}\sim m_0K_X$ on $X$. Take successive blow-ups, say $\pi\colon X'\rightarrow X$ along nonsingular centers, such that the
following conditions are satisfied:
\medskip
\begin{itemize}
\item[(i)] $X'$ is nonsingular and projective;

\item[(ii)] the moving part  of $|m_0K_{X'}|$ is base point free and so that $g_{m_0}=\varphi_{m_0,X}\circ\pi$ is a non-constant morphism;

\item[(iii)]  $\pi^*(K_{m_0})\cup\{\pi-\text{exceptional divisors} \}$ has simple normal crossing
supports.
\end{itemize}

Taking the Stein factorization of the morphism $$g_{m_0}\colon X'\longrightarrow \overline{\varphi_{m_0,X}(X)}\subseteq{\mathbb
P}^{P_{m_0}-1},$$
we have  $X'\overset{f_{m_0}}\longrightarrow
\Gamma\overset{s}\longrightarrow \overline{\varphi_{m_0,X}(X)}$ and the following commutative diagram:

\begin{eqnarray*}
\xymatrix{ X' \ar[rr]^{f_{m_0}}\ar[d]_{\pi }\ar[drr]^{g_{m_0}}&& \Gamma \ar[d]^{s}\\
X \ar@{.>}[rr]_{\varphi_{m_0,X}} && \overline{\varphi_{m_0,X}(X)}}
\end{eqnarray*}
We call $f_{m_0}:X'\lrw \Gamma$ an {\it induced fibration from $\varphi_{m_0}$}. 
Denote by $r(X)$ the canonical index of $X$. 
We may write $m_0K_{X'}\sim_{\mathbb Q}\pi^*(m_0K_X)+E_{\pi, m_0}$
where $E_{\pi, m_0}$ is an effective $\pi$-exceptional ${\mathbb
Q}$-divisor. Denote by $|M_{m_0}|$ the moving part of $|m_0K_{X'}|$.  Set $d_{m_0}=
\dim (\Gamma)$. By the Bertini theorem, when $d_{m_0}\geq 2$,  the general member  $F\in |M_{m_0}|$ is irreducible and smooth and we set $a_{m_0}=1$. 
When $d_{m_0}=1$, $M_{m_0}\equiv
a_{m_0}F$, where $a_{m_0}=\deg f_*\OO_{X'}(M_{m_0})$
and $F$ is a general fiber of $f$.  By the above setting, we always have
\begin{equation}m_0\pi^*(K_X)\equiv a_{m_0}F+E_{m_0}'\end{equation}
for some effective $\bQ$-divisor $E_{m_0}'$ on $X'$.

\begin{lem}\label{integ} Keep the above notation.  Denote by $r_X$ the Cartier index of $X$. Then $r_X(\pi^*(K_X)|_F)^2$ is an integer. 
\end{lem}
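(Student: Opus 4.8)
The plan is to push the computation down from the smooth model $F$ to the (possibly singular) image surface $\bar{S}:=\pi_*(F)\subset X$, where the non-Cartier nature of $K_X$ can be turned to our advantage via the integrality of the Cartier--Weil pairing. First I would observe that, whether $d_{m_0}=1$ (so $F$ is a general fibre) or $d_{m_0}\geq 2$ (so $F\in|M_{m_0}|$ is a general member), the generic irreducible element $F$ is not $\pi$-exceptional, because $P_{m_0}(X)\geq 2$ forces it to dominate a surface in $X$. Hence $\bar{S}=\pi_*(F)$ is a prime Weil divisor on $X$ and $\pi|_F\colon F\to \bar{S}$ is birational. Applying the projection formula to the $\bQ$-Cartier class $K_X$ and the cycle $F$ gives
\[
(\pi^*(K_X)|_F)^2=(\pi^*K_X)^2\cdot F=K_X^2\cdot\pi_*(F)=K_X^2\cdot\bar{S},
\]
so it suffices to establish that $r_X\,(K_X^2\cdot\bar{S})\in\bZ$.

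Next I would reinterpret this as a self-intersection on the surface $\bar{S}$, namely $K_X^2\cdot\bar{S}=(K_X|_{\bar{S}})^2$ (passing to the normalization of $\bar{S}$ if convenient, since the intersection numbers are preserved). The crucial point is that, although $K_X$ is only $\bQ$-Cartier, the restricted class $K_X|_{\bar{S}}$ is represented by an honest \emph{integral} Weil divisor on $\bar{S}$: the local ring of $X$ at the generic point of $\bar{S}$ is a DVR, so the class $[K_X]\in\mathrm{Cl}(X)$ admits a representative $K$ with $\bar{S}\not\subseteq\mathrm{Supp}(K)$, and the trace $K|_{\bar{S}}=K\cap\bar{S}$ is then a well-defined integral $1$-cycle on $\bar{S}$ representing $K_X|_{\bar{S}}$. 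At the same time $r_X(K_X|_{\bar{S}})=(r_XK_X)|_{\bar{S}}$ is Cartier on $\bar{S}$, being the restriction of the Cartier divisor $r_XK_X$.

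Finally I would conclude using that the intersection of a Cartier divisor with a Weil divisor on a projective surface is integer-valued, since it computes the degree of a line bundle on a proper curve. Expressing the self-intersection through the $\bQ$-Cartier structure,
\[
r_X\,(K_X|_{\bar{S}})^2=\big((r_XK_X)|_{\bar{S}}\big)\cdot\big(K_X|_{\bar{S}}\big)\in\bZ,
\]
which is exactly $r_X(\pi^*(K_X)|_F)^2\in\bZ$. The main obstacle, and the real content of the argument, is the middle step: one must resist computing on the smooth model $F$, where $\pi^*K_X|_F$ is a genuine $\bQ$-divisor and the naive estimate (squaring a divisor whose $r_X$-multiple is integral) yields only denominator $r_X^2$. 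The extra factor of $r_X$ is gained precisely by descending to $\bar{S}$, where $K_X|_{\bar{S}}$ remains an integral Weil divisor of Cartier index dividing $r_X$, so that the Cartier--Weil pairing is integral. A secondary point to check carefully is that $\pi_*(F)$ is reduced and irreducible and that the representative $K$ avoiding $\bar{S}$ exists, so that the trace $K|_{\bar{S}}$ is a genuine integral cycle.
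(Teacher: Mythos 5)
Your argument is correct, but it takes a different route from the paper's. The paper also begins by pushing down to $r_X(K_X^2\cdot \pi_*(F))$, but then goes \emph{up} rather than down: it takes a resolution $\mu\colon Y\to X$, writes $\mu^*(K_X)=K_Y-E_\mu$, discards the term $r_X\mu^*(K_X)\cdot E_\mu\cdot F_\mu$ by the projection formula (here it is used that terminal $3$-fold singularities are isolated, so $E_\mu$ lies over finitely many points and $\mu_*(E_\mu\cdot F_\mu)=0$), and concludes because $r_X\mu^*(K_X)\cdot K_Y\cdot F_\mu$ is an intersection of integral Cartier divisors with an integral cycle on the smooth $Y$. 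Both proofs exploit the same underlying idea --- trade one of the two $\bQ$-divisor factors $K_X$ for an honest integral divisor, so that only one factor of $r_X$ is needed instead of $r_X^2$ --- but you implement it by restricting to the surface $\bar{S}$ and invoking the integrality of the Cartier--Weil pairing there, whereas the paper implements it on a resolution where the canonical class is genuinely Cartier. Your version is slightly more general in spirit (it would prove $rD^2\cdot S\in\bZ$ for any integral Weil divisor $D$ of Cartier index $r$), but note that it is not entirely free of the isolated-singularity hypothesis either: for the trace $K\cap\bar{S}$ to be an integral $1$-cycle with $r_X[K\cap\bar{S}]=[(r_XK)|_{\bar{S}}]$, you need $K$ to be Cartier at the generic point of every curve $C\subseteq\bar{S}$, i.e.\ at the codimension-two points of $X$ lying on $\bar{S}$; this holds here precisely because $\mathrm{Sing}(X)$ is a finite set of points, but your appeal to the DVR at the generic point of $\bar{S}$ (a codimension-one statement) does not by itself cover it, and this is the one spot where your write-up should be tightened. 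The paper's detour through $Y$ buys a cleaner computation (everything is Cartier on a smooth model); your descent to $\bar{S}$ buys the conceptual explanation of where the single factor $r_X$, rather than $r_X^2$, comes from.
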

\begin{proof} Set $\pi_*(F)=F_0$. Then $r_X(\pi^*(K_X)|_F)^2=r_X(K_X^2\cdot F_0)$ which is independent of the choice of the birational modification $\pi$.  Take $\mu:Y\lrw X$ to be a resolution of singularities of $X$ which is assumed to have at worst terminal (hence isolated) singularities. Write $K_Y=\mu^*(K_X)+E_{\mu}$ where $E_{\mu}$ is the exceptional divisor. Denote by $F_{\mu}$ the strict transform of $F_0$. Then 
$$r_X(K_X^2\cdot F_0)=(r_X\mu^*(K_X)\cdot (K_Y-E_{\mu})\cdot F_{\mu})
=(r_X\mu^*(K_X)\cdot K_Y\cdot F_{\mu})$$
is an integer. 
\end{proof}

\subsection{Technical set up}\label{setup} Keep the same notation as in \ref{m0}. Pick a generic irreducible element $F$ of $|M_{m_0}|$. Assume that $|G|$ is a base point free linear system on $F$. Pick a generic irreducible element $C$ of $|G|$. Since $\pi^*(K_X)|_F$ is nef and big, we may always assume that there exists a positive rational number $\beta>0$ so that $\pi^*(K_X)|_F-\beta C$ is $\bQ$-linearly equivalent to an effective $\bQ$-divisor.  Let $m$ be a positive integer. Set
$$\xi=(\pi^*(K_X)|_F\cdot C),$$
$$\alpha_m=(m-1-\frac{m_0}{a_{m_0}}-\frac{1}{\beta})\xi.$$

\subsection{Some frequently used results}

\begin{thm}\label{key} (see Chen-Chen \cite[2.2, Theorem 2.7]{EXPIII}) Keep the same assumption as in \ref{m0} and \ref{setup}.  Let $m$ be a positive integer.  Let $X$ be a minimal 3-fold of general type with $P_{m_0}\geq 2$ for some integer $m_0>0$.  
The following statements hold:
\begin{itemize}
\item[(i)] One has  the inequality
\begin{equation}\xi\geq \frac{\deg(K_C)}{1+{m_0}/{a_{m_0}}+{1}/{\beta} }.\end{equation}

\item[(ii)] When $\alpha_m>1$, one has the inequality
\begin{equation}m\xi\geq  \deg(K_C)+\roundup{\alpha_m}.\end{equation}
When $\alpha_m>0$ and $C$ is an even divisor, one has the inequality
\begin{equation}m\xi\geq  \deg(K_C)+2\roundup{\frac{\alpha_m}{2}}.\end{equation}
\item[(iii)] Assume that $|mK_{X'}|$ distinguishes different generic irreducible elements of $|M_{m_0}|$ and that, on the generic irreducible  element $F$ of $|M_{m_0}|$, $|mK_{X'}||_F$ distinguishes different generic irreducible elements of $|G|$.  When $\alpha_m>2$, $\varphi_{m,X}$ is birational onto its image. 
\end{itemize}
\end{thm}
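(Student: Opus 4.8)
The plan is to prove statement (ii) first, deduce (i) from it as a limiting case, and then obtain the birationality criterion (iii) by rerunning the same machinery with two extra points removed. The engine throughout is a two–step application of Kawamata--Viehweg vanishing along the flag $X'\supset F\supset C$, fed by the two numerical inputs recorded in \ref{m0} and \ref{setup}: from $a_{m_0}F\leq m_0\pi^*(K_X)$ one gets $F|_F\leq \frac{m_0}{a_{m_0}}\pi^*(K_X)|_F$ on the surface $F$, and from the $\bQ$-effectivity of $\pi^*(K_X)|_F-\beta C$ one gets $C\leq \frac1\beta\pi^*(K_X)|_F$. I will also use the standard fact that the moving part $M_m$ of $|mK_{X'}|$ satisfies $M_m\leq m\pi^*(K_X)$, so that the moving part of $|mK_{X'}|$ restricted to a generic $C$ has degree at most $M_m\cdot C\leq m(\pi^*(K_X)\cdot C)=m\xi$; this is the upper bound against which everything will be measured.

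To prove (ii), I first lift from $F$ to $X'$. Since $(m-1)K_{X'}-F\geq (m-1-\frac{m_0}{a_{m_0}})\pi^*(K_X)$ is nef and big once $\alpha_m>0$ forces $m-1-\frac{m_0}{a_{m_0}}>0$, vanishing gives $H^1(X',mK_{X'}-F)=0$ and hence a surjection $H^0(X',mK_{X'})\twoheadrightarrow H^0(F,(mK_{X'})|_F)$; writing $(mK_{X'})|_F=K_F+[(m-1)K_{X'}-F]|_F$ by adjunction and absorbing the effective exceptional contributions into the fixed part exhibits inside the restricted moving part a subsystem of shape $|K_F+\roundup{N_F}|$, where $N_F\equiv (m-1-\frac{m_0}{a_{m_0}})\pi^*(K_X)|_F$ is nef and big. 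Next I lift from $C$ to $F$: since $N_F-C\geq (m-1-\frac{m_0}{a_{m_0}}-\frac1\beta)\pi^*(K_X)|_F$ is nef and big exactly when $\alpha_m>0$, vanishing yields $H^0(F,K_F+\roundup{N_F})\twoheadrightarrow H^0(C,(K_F+\roundup{N_F})|_C)$, and $K_C=(K_F+C)|_C$ identifies the restricted bundle as $K_C+D'$ with $\deg D'=(\roundup{N_F}-C)\cdot C\geq (N_F-C)\cdot C\geq \alpha_m$, hence $\geq \roundup{\alpha_m}$. The hypothesis $\alpha_m>1$ is precisely what makes $\deg D'\geq 2$, so that $|K_C+D'|$ is base point free and its degree is realized as a moving degree; as it lies in the moving part of $|mK_{X'}|$ restricted to $C$, that degree is $\leq m\xi$, giving $m\xi\geq \deg(K_C)+\roundup{\alpha_m}$. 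When $C$ is $2$-divisible the parity forces $\deg D'$ to be even, so base point freeness already holds for $\alpha_m>0$ and rounding $\alpha_m$ up to the nearest even integer gives the sharper $m\xi\geq \deg(K_C)+2\roundup{\alpha_m/2}$.

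Statement (i) then falls out by letting the auxiliary integer range: since $\xi>0$ (as $\pi^*(K_X)|_F$ is big) there is an $m$ with $\alpha_m>1$, and for it $m\xi\geq \deg(K_C)+\roundup{\alpha_m}\geq \deg(K_C)+(m-1-\frac{m_0}{a_{m_0}}-\frac1\beta)\xi$; cancelling the common $m\xi$ rearranges to $(1+\frac{m_0}{a_{m_0}}+\frac1\beta)\xi\geq \deg(K_C)$, which is (i). For (iii), I would run the three–step separation of two general points $x_1,x_2$ of $X'$. The assumption that $|mK_{X'}|$ distinguishes generic irreducible elements of $|M_{m_0}|$ handles the case that $x_1,x_2$ lie on distinct $F$'s, and the assumption that $|mK_{X'}||_F$ distinguishes generic irreducible elements of $|G|$ handles distinct $C$'s inside one $F$. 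In the remaining case both points lie on one $C$, and I rerun the second lifting with $C$ replaced by $C+x_1+x_2$: one needs $H^1\big(C,(K_F+\roundup{N_F})|_C(-x_1-x_2)\big)=0$, which on the curve holds once $\deg(K_C+D')-2>\deg(K_C)$, i.e. once $\deg D'>2$, i.e. once $\alpha_m>2$. Since $\alpha_m>2>0$ keeps both earlier surjections intact, the separation on $C$ lifts to $X'$, and $\varphi_{m,X}$ is birational.

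I expect the main obstacle to be the vanishing-theorem bookkeeping. One must check that after subtracting $F$, then $C$, then $C+x_1+x_2$, the twisting $\bQ$-divisors are genuinely nef and big, with the effective pieces $E_{\pi,m_0}$ and $E_{m_0}'$ and the round-ups correctly routed into fixed parts, so that the three thresholds emerge cleanly as $\alpha_m>0,1,2$. The other delicate point is keeping the lifted subsystem $|K_C+D'|$ aligned with the \emph{moving} part of $|mK_{X'}|$ on $C$, so that its degree is counted against $m\xi$ rather than against the larger $mK_{X'}\cdot C$; this is exactly where the base-point-freeness forced by $\alpha_m>1$ (respectively the parity in the even case) is indispensable.
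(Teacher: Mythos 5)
The paper itself gives no proof of this theorem --- it is imported verbatim from Chen--Chen \cite[2.2, Theorem~2.7]{EXPIII} --- but your reconstruction follows essentially the same route as the cited source: the two-step Kawamata--Viehweg restriction along $X'\supset F\supset C$, integrality of $\deg D'$ producing the round-ups $\roundup{\alpha_m}$ and $2\roundup{\alpha_m/2}$, base-point-freeness of $|K_C+D'|$ (degree $\geq 2g(C)$) to make the degree count against the moving part and hence against $m\xi\geq M_m\cdot C$, very ampleness (degree $\geq 2g(C)+1$, i.e.\ $\alpha_m>2$) for (iii), and the cancellation of $m\xi$ to extract (i) from (ii). The only imprecision is the literal claim $H^1(X',mK_{X'}-F)=0$ --- the vanishing must be applied to $K_{X'}+\roundup{(m-1)\pi^*(K_X)-\frac{1}{a_{m_0}}E'_{m_0}-F}$, whose fractional twist is the nef and big divisor you call $N_F$, not to $(m-1)K_{X'}-F$, which is not nef --- but since you state the corrected form immediately afterwards and flag the bookkeeping explicitly, this is presentational rather than a gap.
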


\begin{lem}\label{L1} (see Chen-Chen \cite[Lemma 2.4, Lemma 2.5]{EXPIII}) Let $\sigma:S\lrw S_0$ be a birational contraction  from a nonsingular projective surface of general type onto its minimal model $S_0$.  Assume that $(K_{S_0}^2, p_g(S_0))\neq (1,2)$. Then one has 
\begin{itemize}
\item[(1)]  $(\sigma^*(K_{S_0})\cdot C)\geq 2$ for any moving irreducible curve $C$ on $S$ (i.e. $C$ moves in a linear system of positive dimension);

\item[(2)] $(\sigma^*(K_{S_0})\cdot \tilde{C})\geq 2$  for any very general irreducible curve $\tilde{C}$ on $S$.  
\end{itemize}
 \end{lem}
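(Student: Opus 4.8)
The plan is to transfer everything to the minimal model $S_0$ and then play the adjunction formula against the Hodge index theorem. Write $C_0=\sigma_*(C)$ for the image curve (and $\tilde C_0=\sigma_*(\tilde C)$ in case (2)). In case (1) a curve moving in a positive-dimensional linear system cannot be $\sigma$-exceptional, so $C_0$ is again an irreducible curve and the projection formula gives $(\sigma^*(K_{S_0})\cdot C)=(K_{S_0}\cdot C_0)$. Since $C$ moves, two general members give $C^2=(C\cdot C')\ge 0$, and pushing forward under a birational morphism of smooth surfaces does not decrease the self-intersection (indeed $C_0^2=(C\cdot\sigma^*C_0)\ge C^2$), so $C_0^2\ge 0$. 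As $K_{S_0}$ is nef and big, I would first rule out $(K_{S_0}\cdot C_0)=0$: the Hodge index theorem applied to the big nef class $K_{S_0}$ would then force $C_0^2\le 0$, hence $C_0^2=0$ and $C_0\equiv 0$, which is impossible for a nonzero effective curve. Thus $(K_{S_0}\cdot C_0)\ge 1$.

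The crux is to exclude $(K_{S_0}\cdot C_0)=1$. Under this assumption, adjunction $2p_a(C_0)-2=(K_{S_0}\cdot C_0)+C_0^2$ gives $C_0^2=2p_a(C_0)-3$, which is odd; combined with $C_0^2\ge 0$ this forces $p_a(C_0)\ge 2$ and $C_0^2\ge 1$. Feeding $C_0^2\ge 1$ into the Hodge index inequality $(K_{S_0}\cdot C_0)^2\ge (K_{S_0}^2)(C_0^2)$ yields $1\ge K_{S_0}^2\cdot C_0^2\ge K_{S_0}^2$, so necessarily $K_{S_0}^2=1$, $C_0^2=1$ and $p_a(C_0)=2$; moreover the equality case of the Hodge index theorem forces $C_0\equiv K_{S_0}$. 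Hence the borderline case can only occur in the rigid configuration of a minimal surface with $K_{S_0}^2=1$ admitting a positive-dimensional linear system whose general member is numerically equivalent to $K_{S_0}$.

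To finish case (1) I would invoke the classification of minimal surfaces of general type with $K_{S_0}^2=1$ (Bombieri), for which $p_g(S_0)\le 2$ and $q(S_0)=0$, so that $\chi(\OO_{S_0})=1+p_g(S_0)$. For $C_0\equiv K_{S_0}$ one has $C_0\cdot(C_0-K_{S_0})=0$, so Riemann--Roch gives $\chi(\OO_{S_0}(C_0))=\chi(\OO_{S_0})$; tracking $h^0(C_0)$ together with $h^2(C_0)=h^0(K_{S_0}-C_0)$ should then show that the existence of the moving system $|C_0|$ pins down $p_g(S_0)=2$, i.e. precisely the excluded pair $(K_{S_0}^2,p_g)=(1,2)$, where the surface carries its genus-$2$ canonical pencil and $C_0$ is its generic member. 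I expect this last matching to be the main obstacle, since it is exactly where one must eliminate the remaining small-$p_g$ surfaces and, in particular, control numerically trivial (torsion) twists $K_{S_0}+\tau$ of the canonical class; the delicate input needed is that for $K_{S_0}^2=1$ no such twist produces a moving linear system outside the $(1,2)$ case.

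For case (2), a very general irreducible curve $\tilde C$ avoids the countably many curves that could be $\sigma$-exceptional or contracted by the pluricanonical morphism of $S_0$, so $\tilde C_0$ is an irreducible curve with $(K_{S_0}\cdot \tilde C_0)\ge 1$; and since $\tilde C$ deforms in a family covering $S$, its image satisfies $\tilde C_0^2\ge 0$. The same adjunction-and-Hodge-index dichotomy then applies verbatim, and the borderline value $(K_{S_0}\cdot \tilde C_0)=1$ is excluded by the identical classification argument, again isolating only $(K_{S_0}^2,p_g)=(1,2)$. Therefore $(\sigma^*(K_{S_0})\cdot \tilde C)\ge 2$ under the standing hypothesis, which completes both parts.
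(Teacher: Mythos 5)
The paper does not actually prove this lemma --- it is quoted from Chen--Chen \cite[Lemmas 2.4, 2.5]{EXPIII} --- so there is no in-paper argument to compare with; I can only assess your proposal on its own terms. Your skeleton (projection formula to pass to $C_0=\sigma_*(C)$, Hodge index to exclude $(K_{S_0}\cdot C_0)=0$, then adjunction plus parity plus the Hodge index inequality to pin the borderline case $(K_{S_0}\cdot C_0)=1$ down to $K_{S_0}^2=1$, $C_0^2=1$, $C_0\equiv K_{S_0}$) is correct and is essentially the standard route. But the proof is not complete: the decisive step, which you explicitly defer (``I expect this last matching to be the main obstacle''), is exactly where the content of the lemma lies, and it does not follow from Riemann--Roch bookkeeping alone. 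Concretely, write $C_0\sim K_{S_0}+\tau$ with $\tau$ numerically trivial. If $\tau\sim 0$ then $h^0(C_0)=p_g$ and a moving $|C_0|$ forces $p_g=2$, as desired. If $\tau\not\sim 0$ is torsion, then $h^2(C_0)=h^0(-\tau)=0$ and Riemann--Roch gives $h^0(K_{S_0}+\tau)=\chi(\OO_{S_0})+h^1(-\tau)=1+p_g+h^1(-\tau)$, which for $p_g=1$ is $\geq 2$ \emph{unconditionally}; the moving part of $|K_{S_0}+\tau|$ would then be a genus-two pencil with $(K_{S_0}\cdot C_0)=1$. So for $(K_{S_0}^2,p_g)=(1,1)$ the lemma is literally equivalent to the triviality of the torsion group of such surfaces, which is a theorem of Catanese on surfaces with $K^2=p_g=1$; for $p_g=0$ one needs $h^1(-\tau)=0$ for every torsion class on a numerical Godeaux surface, which one can extract from $q=0$ for its \'etale covers (using, e.g., the inequality $K^2\geq 2p_g$ for irregular surfaces). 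These inputs must be supplied or cited; without them the borderline case is not excluded and the argument stops short of the statement.

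Two smaller points. In part (2) you justify $\tilde C_0^2\geq 0$ by saying a very general curve ``deforms in a family covering $S$''; that is not what very general means, and an arbitrary very general curve need not move. The correct argument is that irreducible curves of negative self-intersection on $S_0$ form a countable set (at most one irreducible curve per negative class of $\mathrm{NS}(S_0)$), so a very general $\tilde C$ avoids them as well as the $\sigma$-exceptional locus; after that your dichotomy applies, and the borderline case again reduces to membership in one of the finitely many systems $|K_{S_0}+\tau|$, each of which is non-moving outside the $(1,2)$ case by the same torsion input. Finally, $q(S_0)=0$ for $K_{S_0}^2=1$ is itself a standard but nontrivial fact that your Riemann--Roch computations rely on and should be sourced.
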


\begin{lem}\label{brat} (\cite[Lemma 2.5]{Chen2014}) Let $S$ be a nonsingular projective surface. Let ${\mathcal M}$ be a nef and big $\bQ$-divisor on $S$ satisfying the following conditions:
\begin{itemize}
\item[(1)] ${\mathcal M}^2>8$;
\item[(2)] $({\mathcal M}\cdot C_{P})\geq 4$ for all irreducible curves $C_{P}$ passing through each very general point $P\in S$.
\end{itemize}
Then the linear system $|K_S+\roundup{{\mathcal M}}|$ separates two distinct points in very general positions.  Consequently $\Phi_{|K_S+\roundup{{\mathcal M}}|}$ is a birational map onto its image.
\end{lem}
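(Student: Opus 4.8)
The plan is to deduce the birationality statement from the separation of two very general points and to realize that separation through a multiplier–ideal (Nadel vanishing) argument. First I would fix two distinct very general points $P_1,P_2\in S$; once the sections of $|K_S+\roundup{\mathcal{M}}|$ separate such a general pair, the rational map $\Phi_{|K_S+\roundup{\mathcal{M}}|}$ is generically injective on a dense open set and hence birational onto its image, which yields the ``consequently'' clause for free. The whole problem is thus reduced to producing, for a very general pair $P_1,P_2$, a section of $\mathcal{O}_S(K_S+\roundup{\mathcal{M}})$ that vanishes at $P_1$ but not at $P_2$, and symmetrically.

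Next I would manufacture a singular auxiliary divisor. Since $\mathcal{M}^2>8$, a Riemann--Roch dimension count shows that, for a rational $c<1$ and $k$ large and divisible enough that $kc\mathcal{M}$ is integral, the quantity $h^0(S,kc\mathcal{M})\sim\frac{c^2}{2}\mathcal{M}^2k^2$ beats the number of conditions $\approx 2\cdot\frac{(2k)^2}{2}=4k^2$ imposed by requiring multiplicity $\ge 2k$ at each of $P_1,P_2$, provided $\frac{c^2}{2}\mathcal{M}^2>4$; because $\mathcal{M}^2>8$ strictly, one may choose $c<1$ with $c^2\mathcal{M}^2>8$. This produces an effective $\bQ$-divisor $D\sim_{\bQ}c\mathcal{M}$ with $\text{mult}_{P_i}(D)\ge 2$ for $i=1,2$, so that $(S,D)$ fails to be klt at both points. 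To match the round-up appearing in the statement, I would replace $D$ by $\widetilde{D}=D+(\roundup{\mathcal{M}}-\mathcal{M})$; the extra term is effective and, as $P_1,P_2$ are very general, avoids them, so the local behaviour at $P_1,P_2$ is unchanged while now $\roundup{\mathcal{M}}-\widetilde{D}\sim_{\bQ}(1-c)\mathcal{M}$ is nef and big, exactly the positivity needed to run vanishing.

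The delicate step, and the one I expect to be the main obstacle, is to arrange that the non-klt locus (the cosupport of the multiplier ideal) of a suitable rescaling of $\widetilde{D}$ is, near the two points, exactly the two reduced points $P_1,P_2$ rather than a fatter scheme or a curve through them. Here both hypotheses and the very-general choice of $P_1,P_2$ enter. I would pass to the log canonical threshold and, if the minimal log canonical centre through some $P_i$ were a curve $C$, apply the standard ``cutting down'' (tie-breaking) technique to lower its dimension to a point: the numerical input $(\mathcal{M}\cdot C_P)\ge 4$ guarantees enough positivity of $\mathcal{M}|_C$ to create the required isolated singularity while keeping $\roundup{\mathcal{M}}-(\text{new divisor})$ nef and big, and the very-general position of $P_1,P_2$ ensures such curves move and behave generically. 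Carrying this perturbation through so that the cosupport becomes precisely $\{P_1,P_2\}$, reduced and isolated, is the crux of the argument.

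Finally I would conclude by Nadel/Kawamata--Viehweg vanishing: with $\mathcal{J}$ the multiplier ideal of the perturbed divisor and $\roundup{\mathcal{M}}-(\cdot)$ nef and big, one gets $H^1\bigl(S,\mathcal{O}_S(K_S+\roundup{\mathcal{M}})\otimes\mathcal{J}\bigr)=0$, whence the restriction
\[
H^0\bigl(S,\mathcal{O}_S(K_S+\roundup{\mathcal{M}})\bigr)\twoheadrightarrow \bC_{P_1}\oplus\bC_{P_2}
\]
is surjective. Thus $|K_S+\roundup{\mathcal{M}}|$ separates the very general points $P_1$ and $P_2$, and the birationality of $\Phi_{|K_S+\roundup{\mathcal{M}}|}$ follows as explained in the first paragraph.
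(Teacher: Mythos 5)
The paper gives no proof of this lemma at all: it is quoted verbatim from \cite[Lemma 2.5]{Chen2014}, so there is no internal argument to measure yours against. Your overall strategy --- reduce to separating two very general points, use $\mathcal{M}^2>8$ and a Riemann--Roch count to build $D\sim_{\bQ}c\mathcal{M}$ with $c<1$ and multiplicity $\geq 2$ at both points, absorb $\roundup{\mathcal{M}}-\mathcal{M}$ harmlessly, and finish with Nadel/Kawamata--Viehweg vanishing --- is the standard route to such Reider-type statements for $\bQ$-divisors and is in spirit what the cited source does. Those parts of your sketch are fine.

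The step you yourself flag as the crux, however, contains a genuine gap: tie-breaking a one-dimensional log canonical centre down to a point cannot be carried out with the stated hypotheses. The count $c^2\mathcal{M}^2>8$ forces $c$ arbitrarily close to $1$ when $\mathcal{M}^2$ is close to $8$, and this is exactly the regime in which the lemma is invoked in this paper: in Lemma \ref{l2} one applies it to $\mathcal{M}=n\caL$ with $n=\rounddown{\sqrt{8/\caL^2}}+1$, so that e.g.\ for $\caL^2=1/195$ one has $\mathcal{M}^2=1600/195\approx 8.2$ and hence $1-c\approx 0.01$. Cutting a curve centre $C$ down to the point $P_1$ requires adding an auxiliary effective divisor carrying a bounded-below fraction of the degree of $\mathcal{M}|_C$ concentrated at $P_1$ (at least degree $1$ out of a total $c''(\mathcal{M}\cdot C)$ with $(\mathcal{M}\cdot C)$ possibly equal to $4$, i.e.\ $c''\geq 1/4$), which is incompatible with keeping the total divisor $\equiv c'\mathcal{M}$ for some $c'<1$. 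The hypothesis $(\mathcal{M}\cdot C_P)\geq 4$ is not there to fuel a perturbation of the centre; in the Reider/Masek-type arguments it enters the curve case differently, either via a Hodge-index estimate showing that a curve in the non-klt locus through a very general point would force $(\mathcal{M}\cdot C)<4$, or via restricting to $C$ and checking that the residual degree exceeds $2$ so that vanishing on $C$ separates the two assigned points. As written, your mechanism for the decisive step would fail precisely in the situations where the lemma is used.
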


\begin{lem}\label{tau} (cf. \cite[Lemma 2.6]{Chen2014}) Let $X'\lrw X$ be a birational morphism from a nonsingular projective model $X'$ onto $X$, which is a minimal projective 3-fold of general type. Assume that $f:X'\lrw \bP^1$ is a fibration with the general fiber $F$. Denote by $\sigma:F\lrw F_0$ the birational contraction onto the minimal model $F_0$.  
The following statements are true:
\begin{itemize}
\item[(1)] Set $\mu_0=\frac{K_X^3}{3(\pi^*(K_X)|_F)^2}$. Then
$$\pi^*(K_X)|_F\geq (\frac{\mu_0}{\mu_0+1}-\varepsilon)\sigma^*(K_{F_0})$$
where $0<\varepsilon\ll 1$ is any rational number.
\item[(2)] If $\pi^*(K_X)\geq \nu_0 F$ for some positive rational number $\nu_0$, then 
$$\pi^*(K_X)|_F\geq \frac{\nu_0}{\nu_0+1}\sigma^*(K_{F_0}).$$
\end{itemize}
\end{lem}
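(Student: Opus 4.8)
The plan is to deduce part (1) from part (2), so that part (2) carries the essential content, and to attack part (2) through adjunction on a general fibre followed by a bound on the exceptional defect.

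For part (1) I would first show that $\pi^*(K_X)-tF$ is $\bQ$-linearly equivalent to an effective divisor for every rational $t<\mu_0$. Since two general fibres of $f$ are disjoint, $F\cdot F\equiv 0$ and $F^3=0$, so the cube expands as
\[
(\pi^*(K_X)-tF)^3=K_X^3-3t\,(\pi^*(K_X)|_F)^2,
\]
which is strictly positive precisely when $t<\mu_0$. Because $\pi^*(K_X)$ is nef, the standard volume estimate for nef divisors on a fibred threefold (as in the techniques of Chen--Chen \cite{EXPI}) shows that $\pi^*(K_X)-tF$ is big, hence $\bQ$-effective, for every $t<\mu_0$; that is, $\pi^*(K_X)\geq \nu_0 F$ with $\nu_0=\mu_0-\varepsilon'$. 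Applying part (2) then gives $\pi^*(K_X)|_F\geq \frac{\nu_0}{\nu_0+1}\sigma^*(K_{F_0})$, and since $\nu\mapsto \nu/(\nu+1)$ is continuous and increasing while $\sigma^*(K_{F_0})$ is big (hence $\bQ$-effective), one may shrink $\varepsilon'$ so that the coefficient exceeds $\frac{\mu_0}{\mu_0+1}-\varepsilon$ and decrease the coefficient freely, yielding the stated inequality.

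For part (2) the starting point is adjunction. As $F$ is a general fibre over $\bP^1$ its normal bundle is trivial, so $F|_F\equiv 0$ and $K_F=(K_{X'}+F)|_F=K_{X'}|_F$. Writing $K_{X'}=\pi^*(K_X)+E_{\pi}$ with $E_{\pi}\geq 0$ exceptional, and letting $K_F=\sigma^*(K_{F_0})+E_{\sigma}$ be the Zariski decomposition of $K_F$ into its nef part $\sigma^*(K_{F_0})$ and negative part $E_{\sigma}$, we obtain the exact identity
\[
\pi^*(K_X)|_F=\sigma^*(K_{F_0})+E_{\sigma}-E_{\pi}|_F .
\]
Consequently the assertion $\pi^*(K_X)|_F\geq \frac{\nu_0}{\nu_0+1}\sigma^*(K_{F_0})$ is equivalent to the single bound $E_{\pi}|_F\leq \frac{1}{\nu_0+1}\sigma^*(K_{F_0})+E_{\sigma}$ on the exceptional defect of $\pi$ along $F$.

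The hard part will be precisely this last bound. Note that the formal relations above are insensitive to $\nu_0$: every manipulation involving $F$ collapses on restriction because $F|_F\equiv 0$ and $K_{X'}|_F=K_F$ exactly, so no pure divisor-class juggling can produce the coefficient. To bring $\nu_0$ into play I would use the hypothesis $\pi^*(K_X)\geq \nu_0 F$ together with the negativity lemma for the birational contraction $\pi$, comparing the two contractions $\pi|_F\colon F\to \pi(F)$ and $\sigma\colon F\to F_0$: the $\sigma$-exceptional part of $E_{\pi}|_F$ is absorbed into $E_{\sigma}$, while the residual horizontal part is estimated against $\frac{1}{\nu_0+1}\sigma^*(K_{F_0})$ using that $\pi^*(K_X)|_F$ is nef and that a larger $\nu_0$ forces $\pi^*(K_X)$ to concentrate along $F$, shrinking the discrepancies transverse to the fibre. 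This discrepancy bookkeeping, rather than the adjunction set-up, is where the real work lies, and it is the step I expect to require the most care.
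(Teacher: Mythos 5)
Your reduction of (1) to (2) matches the paper's: both establish $\pi^*(K_X)\geq(\mu_0-\varepsilon')F$ and then invoke (2). The paper does this by counting sections (comparing $P_m(X')\approx\frac{1}{6}K_X^3m^3$ with $h^0(F,m\pi^*(K_X)|_F)\approx\frac{1}{2}(\pi^*(K_X)|_F)^2m^2$ through the restriction maps $H^0(X',M_m-tF)\to H^0(F,m\pi^*(K_X)|_F)$), which is exactly the proof of the volume inequality $\Vol(A-tF)\geq A^3-3t\,A^2\cdot F$ for nef $A$ that you invoke as a black box; so part (1) is acceptable, provided you cite that estimate rather than the bare positivity of $(\pi^*(K_X)-tF)^3$, which by itself does not give bigness of a class that is not known to be nef.

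Part (2) has a genuine gap, and it is the entire content of the lemma. Your identity $\pi^*(K_X)|_F=\sigma^*(K_{F_0})+E_\sigma-E_\pi|_F$ is correct but, as you yourself observe, carries no trace of $\nu_0$; everything is deferred to the bound $E_\pi|_F\leq\frac{1}{\nu_0+1}\sigma^*(K_{F_0})+E_\sigma$, for which you offer only the phrase ``discrepancy bookkeeping via the negativity lemma.'' That is not an argument, and I do not see how it could become one: $E_\pi$ is determined by the singularities of $X$ and the choice of $\pi$ and is entirely independent of $\nu_0$, while the negativity lemma concerns $\pi$-exceptional divisors paired against $\pi$-nef classes and has no evident interaction with the hypothesis $\pi^*(K_X)\geq\nu_0F$. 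The actual mechanism --- which the paper imports wholesale by citing Chen--Chen \cite[Lemma 2.1(ii)]{EXPIII} with $(m_0,\Lambda,\theta_\Lambda)$ replaced by $(N,\pi(N\nu_0F),N\nu_0)$ --- is of a completely different nature: an iterated application of the Kawamata--Viehweg vanishing theorem to linear systems of the form $|K_{X'}+\roundup{n\pi^*(K_X)}+F|$, whose restrictions to $F$ produce a monotone sequence of coefficients $\beta_n$ with $\pi^*(K_X)|_F\geq\beta_n\sigma^*(K_{F_0})$ converging to $\frac{\nu_0}{\nu_0+1}$. Without reproducing, or at least citing, that inductive vanishing argument, part (2) is unproven, and with it part (1).
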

\begin{proof} (1) For any sufficiently large and divisible integer $m$ (such that $m$ is divisible by the canonical index $r(X)$), the Riemann-Roch formula on $X'$ implies
$$P_m(X')=h^0(X', m\pi^*(K_X))\approx \frac{1}{6}K_X^3m^3.$$
On the other hand, the Riemann-Roch on $F$ gives
$$h^0(F,m\pi^*(K_X)|_F)\approx\frac{1}{2}(\pi^*(K_X)|_F)^2m^2$$
as $m$ is sufficiently large. Therefore 
$$P_m(X')>(\mu_0-\delta)mh^0(F,m\pi^*(K_X)|_F)$$
for $m\gg 0$ and very small number $\delta>0$. Consider the restriction maps:
$$H^0(X', M_m-tF)\overset{\theta_t}\lrw V_{m,t}\subset H^0(F, m\pi^*(K_X)|_F)$$ 
where $t\geq 0$, $m$ is sufficiently divisible and $V_{m,t}$ is the image space. Since $\dim (V_{m,t})\leq h^0(F,m\pi^*(K_X)|_F)$ for all $t$, we naturally have 
$$m\pi^*(K_X)-(\mu_0-\delta)mF\geq M_m-(\mu_0-\delta)mF>0$$ for all large and divisible integers $m$ (such that $(\mu_0-\delta)m$ is integral).  Thus, up to $\bQ$-linear equivalence, one has 
$$\pi^*(K_X)\geq (\mu_0-\delta)F.$$
The rest of (1) follows directly from the statement (2).

(2)  We may find a very large and divisible integer $N$ so that $N\pi^*(K_X)$ and $N\nu_0F$ are Cartier divisors and that  
$$N\pi^*(K_X)\geq N\nu_0F$$ 
holds as Cartier divisors. Now we may apply Chen-Chen \cite[Lemma 2.1(ii)]{EXPIII} by replacing 
$(m_0, \Lambda, \theta_{\Lambda})$ with $(N, \pi(N\nu_0F), N\nu_0)$. What we get is
$$\pi^*(K_X)|_F\geq \frac{N\nu_0}{N+N\nu_0}\sigma^*(K_{F_0})=
\frac{\nu_0}{\nu_0+1}\sigma^*(K_{F_0}).$$
\end{proof}

\section{\bf Proof of the main theorem}

\subsection{General setting} \label{gs}
In this section we always assume $X$ to be a minimal 3-fold of general type with $\delta(X)=18$.  Set $m_0=18$ and keep the same notation as in \ref{m0}. We have an induced fibration $f=f_{18}: X'\lrw \Gamma$.  Pick up a general fiber $F$ of $f$. For this kind of 3-folds, the following properties are known:

\begin{itemize}
\item[(R1)]  (see \cite[Theorem 5.1]{EXPIII}) $K_X^3=\frac{1}{1170}$, $P_2(X)=q(X)=0$, $\chi(\OO_X)=2$ and Reid's singularity basket
$$B_X=\{4\times (1,2), (4,9), (2,5), (5,13), 3\times (1,3), 2\times (1,4)\}.$$

\item[(R2)] Direct calculation shows that $P_8=1$, $P_{18}=2$, $P_{19}=0$, $P_{24}=3$, $P_{36}=8$ and $P_m>0$ for all $m\geq 20$. 

\item[(R3)]  (see \cite[Theorem 6.1]{EXPIII}) $p_g(F)=1$ and $r_s(X)\leq 61$. 
\end{itemize}

Set $m_1=24$ and $m_2=36$. For any positive integer $l$, we have defined  $|M_l|=\text{Mov}|lK_{X'}|$. Modulo further birational modification to $\pi$, we may assume that $|M_{18}|$, $|M_{24}|$ and $|M_{36}|$ are all base point free.

Clearly we have $\Gamma\cong \bP^1$ since $q(X)=0$. We will analyze the interactions among $\varphi_{18,X}$, $\varphi_{24,X}$ and $\varphi_{36,X}$. On a general fiber $F$ of $f:X'\lrw \bP^1$, we set $\caL=\pi^*(K_X)|_F$, which is a nef and big $\bQ$-divisor. 

\subsection{A sufficient condition for birationality of $\varphi_{m,X}$}

\begin{lem}\label{l1} Keep the above setting. Then $|mK_{X'}|$ can distinguish different generic irreducible elements of $|M_{18}|$ for all $m\geq 38$. 
\end{lem}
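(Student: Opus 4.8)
The plan is to reduce the statement to the evident fact that the pencil underlying $|M_{18}|$ already separates its own general members, and then to propagate this separation to $|mK_{X'}|$ by exhibiting $|M_{18}|$ as a subsystem up to a fixed effective divisor.

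First I would record the structure of $|M_{18}|$. Since $P_{18}=2$ and $q(X)=0$, the system $|M_{18}|$ is composed of the pencil $f:X'\lrw\Gamma\cong\bP^1$; because $\dim|M_{18}|=P_{18}-1=1$ and $\Gamma\cong\bP^1$, one has $a_{18}=\deg f_*\OO_{X'}(M_{18})=1$, so $M_{18}\equiv F$ and the generic irreducible element of $|M_{18}|$ is exactly a general fibre $F$ of $f$. Consequently the morphism $\Phi_{|M_{18}|}$ factors through $f$ and sends two distinct general fibres $F_1=f^{-1}(p_1)$ and $F_2=f^{-1}(p_2)$ to the two distinct points $p_1\neq p_2$ of $\Gamma\subset\bP^{1}$; in particular $|M_{18}|$ itself already distinguishes $F_1$ and $F_2$.

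The core step is the containment $|mK_{X'}|\lsgeq|M_{18}|$ for every $m\geq 38$. For such $m$ we have $m-18\geq 20$, and (R2) gives $P_{m-18}(X)>0$; after the birational modifications of \ref{m0} this means $(m-18)K_{X'}$ is linearly equivalent to an effective divisor $D$. Writing $|18K_{X'}|=|M_{18}|+Z_{18}$ with $Z_{18}$ the fixed part, I would then use
\[
|mK_{X'}|=|18K_{X'}+(m-18)K_{X'}|\supseteq|M_{18}|+Z_{18}+D,
\]
which is precisely $|mK_{X'}|\lsgeq|M_{18}|$. Finally I would conclude as follows: the general fibres $F_1,F_2$ move in the pencil $f$, hence are not contained in the fixed divisor $Z_{18}+D$, and adding a fixed effective divisor to every member of a linear system does not alter the induced rational map away from that divisor. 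Therefore $\Phi_{|mK_{X'}|}$ refines $\Phi_{|M_{18}|}$ on the complement of $Z_{18}+D$, so it still separates $F_1,F_2$ into loci lying over the distinct points $p_1,p_2$; thus $\overline{\Phi_{|mK_{X'}|}(F_1)}\not\subseteq\overline{\Phi_{|mK_{X'}|}(F_2)}$ and symmetrically, i.e.\ $|mK_{X'}|$ distinguishes $F_1$ and $F_2$.

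The only delicate point—and the reason the threshold is exactly $38$ rather than $36$—is the numerical input $P_{m-18}>0$: since (R2) records $P_{19}=0$, the splitting above breaks down at $m=37$, and $m=38$ is the first value for which $m-18$ lands in the range $\{20,21,\dots\}$ where positivity of the plurigenera is guaranteed. I do not expect any genuine geometric obstacle beyond this vanishing/nonvanishing bookkeeping, so the whole argument should be short.
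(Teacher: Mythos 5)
Your proposal is correct and follows essentially the same route as the paper: the paper's (three-line) proof likewise deduces $mK_{X'}\geq M_{18}\sim F$ for $m\geq 38$ from the plurigenus data in (R2) and then invokes $q(X)=0$ to see that $|F|$ is a rational pencil whose distinct fibres are separated by any larger system; your write-up merely fills in the details ($a_{18}=1$, the fixed-part bookkeeping). Your closing aside is slightly off — the splitting actually still works at $m=36$ (since $P_{18}>0$) and only fails at $m=37$ because $P_{19}=0$, which is why $38$ is the uniform threshold — but this does not affect the argument.
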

\begin{proof}  By (R2), we have $mK_{X'}\geq M_{18}\sim F$ whenever $m\geq 38$. Since $q(X)=0$, $|F|$ is a rational pencil. Thus $|mK_{X'}|$ naturally distinguishes different fibers of $f$. 
\end{proof}

\begin{lem}\label{l2}   Assume that $\caL\geq \tilde{\beta}\sigma^*(K_{F_0})$ for certain positive rational number $\tilde{\beta}$. Then $|K_F+\roundup{n\caL}|$ gives a birational map whenever
$$n\geq \text{max}\{\rounddown{\sqrt{\frac{8}{\caL^2}}}+1, \frac{2}{\tilde{\beta}}\}.$$
\end{lem}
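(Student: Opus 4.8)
The plan is to deduce the statement from the birationality criterion of Lemma \ref{brat}, applied to the surface $S=F$ with the nef and big $\bQ$-divisor $\mathcal{M}=n\caL$. Since $\caL=\pi^*(K_X)|_F$ is nef and big and $n>0$, so is $\mathcal{M}$; it therefore remains only to check the two numerical hypotheses (1) and (2) of that lemma, after which the conclusion that $\Phi_{|K_F+\roundup{n\caL}|}$ is birational is immediate from $\mathcal{M}=n\caL$.

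Verifying (1) is routine: one has $\mathcal{M}^2=n^2\caL^2$, and the lower bound $n\geq\rounddown{\sqrt{8/\caL^2}}+1>\sqrt{8/\caL^2}$ (using $\caL^2>0$) gives $n^2>8/\caL^2$, hence $\mathcal{M}^2=n^2\caL^2>8$.

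The substantive step is (2), namely that $(\mathcal{M}\cdot C_P)\geq 4$ for every irreducible curve $C_P$ through a very general point $P\in F$. First I would use the hypothesis $\caL\geq\tilde{\beta}\sigma^*(K_{F_0})$: the difference $\caL-\tilde{\beta}\sigma^*(K_{F_0})$ is $\bQ$-linearly equivalent to an effective $\bQ$-divisor $E$, and since $P$ is very general, $C_P$ is not contained in the fixed, finitely supported locus $\mathrm{Supp}(E)$, so $(E\cdot C_P)\geq 0$ and thus $(\caL\cdot C_P)\geq\tilde{\beta}(\sigma^*(K_{F_0})\cdot C_P)$. Next I would invoke Lemma \ref{L1}(2) to bound $(\sigma^*(K_{F_0})\cdot C_P)$ from below by $2$. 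The point to check here is the hypothesis of that lemma: by (R3) the general fiber satisfies $p_g(F)=1$, and since $p_g$ is a birational invariant the minimal model has $p_g(F_0)=1$, so $(K_{F_0}^2,p_g(F_0))\neq(1,2)$ and the lemma applies to the very general curve $C_P$. Combining these gives $(\caL\cdot C_P)\geq 2\tilde{\beta}$, whence $(\mathcal{M}\cdot C_P)=n(\caL\cdot C_P)\geq 2n\tilde{\beta}\geq 4$, the last inequality holding because $n\geq 2/\tilde{\beta}$.

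With (1) and (2) in hand, Lemma \ref{brat} yields that $|K_F+\roundup{n\caL}|$ separates two distinct points in very general positions and hence defines a birational map, as desired. I expect the only delicate point to be the verification of (2): one must ensure that Lemma \ref{L1}(2) genuinely applies, i.e.\ that an irreducible curve through a very general point qualifies as a \emph{very general irreducible curve} and that the excluded case $(K_{F_0}^2,p_g(F_0))=(1,2)$ does not occur --- both of which are secured by $p_g(F)=1$ from (R3). The remainder is bookkeeping with the two explicit bounds on $n$.
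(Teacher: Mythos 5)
Your proposal is correct and follows essentially the same route as the paper: apply Lemma \ref{brat} to $\mathcal{M}=n\caL$ on $F$, check $\mathcal{M}^2>8$ from the first bound on $n$, and check $(\mathcal{M}\cdot C)\geq 4$ for very general curves via $\caL\geq\tilde{\beta}\sigma^*(K_{F_0})$, Lemma \ref{L1}(2), and the fact that $F_0$ is not a $(1,2)$-surface (since $p_g(F_0)=1$). Your write-up only adds explicit justifications (e.g.\ why $(E\cdot C_P)\geq 0$) that the paper leaves implicit.
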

\begin{proof} First, we have $(n\caL)^2>8$ for such numbers $n$. Secondly, for any very general curve $\tilde{C}$ on $F$, one has
$$(n\caL\cdot \tilde{C})\geq n\tilde{\beta}(\sigma^*(K_{F_0})\cdot \tilde{C})\geq 
2n\tilde{\beta}\geq 4$$
by Lemma \ref{L1}(2) and the fact that $F_0$ is not a $(1,2)$ surface.  Thus $|K_F+\roundup{n\caL}|$ gives a birational map by Lemma \ref{brat}. 
\end{proof}

\begin{prop}\label{k1} Let $X$ be a minimal 3-fold of general type with $\delta(X)=18$. Keep the above notation. Assume that $\caL\geq \tilde{\beta}\sigma^*(K_{F_0})$ for certain positive rational number $\tilde{\beta}$. Then $\varphi_{m,X}$ is birational for all
$$m\geq \text{max}\{\rounddown{\sqrt{\frac{8}{\caL^2}}}+20, \frac{2}{\tilde{\beta}}+19, 38\}.$$
\end{prop}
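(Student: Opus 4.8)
The plan is to combine the birationality criterion of Theorem \ref{key}(iii) with the explicit birationality of the restricted system furnished by Lemma \ref{l2}. The strategy is to produce, for each $m$ in the stated range, a base-point-free subsystem $|G|$ on the generic fiber $F$ whose generic irreducible element $C$ can be distinguished by $|mK_{X'}||_F$, while simultaneously verifying that $|mK_{X'}|$ separates distinct fibers and that $\alpha_m>2$.

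First I would handle the three conditions required by Theorem \ref{key}(iii) in turn. The separation of different generic irreducible elements of $|M_{18}|$ — i.e.\ different fibers $F$ of $f$ — is immediate from Lemma \ref{l1}, since $m\geq 38$ is guaranteed by the third entry in the maximum. Next, restricting to a fixed general $F$, I would take $|G|$ to be the moving part of $|K_F+\roundup{n\caL}|$ for the integer $n=m-19$; by the hypothesis $\caL\geq\tilde\beta\sigma^*(K_{F_0})$ together with the lower bounds $m\geq\rounddown{\sqrt{8/\caL^2}}+20$ and $m\geq 2/\tilde\beta+19$, one checks $n\geq\max\{\rounddown{\sqrt{8/\caL^2}}+1,\,2/\tilde\beta\}$, so Lemma \ref{l2} tells us that $\Phi_{|K_F+\roundup{n\caL}|}$ is birational onto its image. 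The point is that a birational map on $F$ automatically distinguishes distinct generic irreducible elements $C$ of its own moving part. The remaining technical step is to pass from this restricted birationality to the statement that $|mK_{X'}||_F\succcurlyeq|K_F+\roundup{n\caL}|$, so that the restriction of the full system inherits the separating property; this uses the standard argument that $mK_{X'}-F$ is effective and that $K_F+\roundup{n\caL}\leq (m\pi^*(K_X))|_F$ via $K_{X'}|_F=K_F$ and $\pi^*(K_X)|_F=\caL$, absorbing the fractional rounding into the nef part.

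Finally I would verify the numerical hypothesis $\alpha_m>2$ of Theorem \ref{key}(iii) with the data $m_0=18$, $a_{m_0}=a_{18}$, and the quantities $\xi,\beta$ attached to the chosen $C$. Here I expect to invoke part (i) of Theorem \ref{key} (or Lemma \ref{tau}) to bound $\xi=(\caL\cdot C)$ and $\caL^2$ from below in terms of $\tilde\beta$ and the basket data (R1), translating the lower bound on $m$ into $\alpha_m=(m-1-18/a_{18}-1/\beta)\xi>2$. With all three conditions of Theorem \ref{key}(iii) in place, $\varphi_{m,X}$ is birational onto its image, completing the proof.

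The main obstacle I anticipate is the bookkeeping in the second step: matching the integer $n=m-19$ to both thresholds in Lemma \ref{l2} \emph{and} reconciling the rounding $\roundup{n\caL}$ with the integrality of $mK_{X'}|_F$, since $\caL$ is only a $\bQ$-divisor and $\roundup{n\caL}\leq n\caL+(\text{boundary correction})$ must be controlled so that $K_F+\roundup{n\caL}$ genuinely sits inside $(mK_{X'})|_F$. Getting the off-by-one constants ($+20$, $+19$) to line up exactly with the constants in Lemma \ref{l2}, rather than losing a unit, is where care is needed.
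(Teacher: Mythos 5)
Your first two steps reproduce the paper's proof: separate distinct fibers via Lemma \ref{l1} (using $m\geq 38$), and for $n=m-19$ restrict $|K_{X'}+\roundup{n\pi^*(K_X)}+F|\lsleq |(n+19)K_{X'}|$ to $F$ so as to land on $|K_F+\roundup{n\caL}|$, which is birational by Lemma \ref{l2}; your bookkeeping of the thresholds $+20$ and $+19$ against the constants of Lemma \ref{l2} is correct. One point, however, is glossed over: the inclusion $|mK_{X'}||_F\lsgeq |K_F+\roundup{n\caL}|$ is not a matter of comparing divisor classes. You must show that sections of $K_F+\roundup{n\pi^*(K_X)}|_F$ extend, i.e.\ that the restriction map $H^0(X',K_{X'}+\roundup{n\pi^*(K_X)}+F)\rightarrow H^0(F,K_F+\roundup{n\pi^*(K_X)}|_F)$ is surjective; this is exactly where the paper applies the Kawamata--Viehweg vanishing theorem to $K_{X'}+\roundup{n\pi^*(K_X)}$ (with $n\pi^*(K_X)$ nef and big and the fractional part in simple normal crossing). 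Your ``standard argument that $mK_{X'}-F$ is effective'' only yields containment of divisors, not of linear systems.

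The more serious problem is your third step, which is a genuine gap: by routing the conclusion through Theorem \ref{key}(iii) you oblige yourself to verify $\alpha_m>2$ for a curve family $C$ inside $\text{Mov}|K_F+\roundup{n\caL}|$, but you have no control over the quantities $\xi$ and $\beta$ attached to that family, and there is no visible way to extract $\alpha_m>2$ from the hypothesis on $m$, which is phrased purely in terms of $\caL^2$ and $\tilde{\beta}$. Fortunately the step is also unnecessary. Once $|mK_{X'}|$ distinguishes distinct fibers and $|mK_{X'}||_F$ contains, up to a fixed effective divisor, a linear system that is birational on $F$, the map $\varphi_{m,X}$ already separates two general points of $X'$ (they lie either on distinct general fibers or on a single general fiber), hence is birational; no descent to curves and no verification of $\alpha_m>2$ is needed. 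Deleting the detour through Theorem \ref{key}(iii) and making the vanishing-theorem step explicit turns your proposal into the paper's argument.
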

\begin{proof} For any $n>0$, we have
$$|K_{X'}+\roundup{n\pi^*(K_X)}+F|\lsleq |(n+19)K_{X'}|.$$
By Kawamata-Viehweg vanishing theorem (\cite{KV,VV}), we have  
\begin{eqnarray*}
|K_{X'}+\roundup{n\pi^*(K_X)}+F||_F&=&|K_F+\roundup{n\pi^*(K_X)}|_F|\\
&\lsgeq& |K_F+\roundup{n\caL}|.
\end{eqnarray*}
Now the statement clearly follows from Lemma \ref{l1} and Lemma \ref{l2}. 
\end{proof}

\begin{rem}  In practice we may take $\tilde{\beta}$ to be $\frac{1}{19}$ by Lemma \ref{tau} while applying Proposition \ref{k1}.
\end{rem}

\subsection{The case when $|18K|$ and $|24K|$ are composed of the same pencil}

\begin{thm}\label{same} Let $X$ be a minimal 3-fold of general type with $\delta(X)=18$. Assume that $|18K_X|$ and $|24K_X|$ are composed of the same pencil of surfaces. Then $\varphi_{m,X}$ is birational for all $m\geq 53$. 
\end{thm}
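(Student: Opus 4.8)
The plan is to read off the fibre data forced by the same-pencil hypothesis, feed the resulting constants into Proposition \ref{k1}, and then clear away one extremal family of fibres with the help of the third map $\varphi_{36}$. First I would determine $a_{18}$ and $a_{24}$. Since $q(X)=0$ gives $\Gamma\cong\bP^1$ and $P_{18}=2$, the base point free $M_{18}$ is a pull-back $f^*\OO_{\bP^1}(a_{18})$ with $a_{18}+1=P_{18}$, so $a_{18}=1$; as $|24K_X|$ is composed of the same pencil $f$, likewise $M_{24}\sim f^*\OO_{\bP^1}(a_{24})$ with $a_{24}+1=P_{24}=3$, so $a_{24}=2$. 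Hence $24\pi^*(K_X)\geq M_{24}\equiv 2F$, that is $\pi^*(K_X)\geq\frac1{12}F$, and Lemma \ref{tau}(2) yields $\caL\geq\frac1{13}\sigma^*(K_{F_0})$, so I may take $\tilde{\beta}=\frac1{13}$. The same inequality gives $\caL^2\leq 12K_X^3=\frac2{195}$, while the nefness of $\caL$ together with $\caL\geq\frac1{13}\sigma^*(K_{F_0})$ gives $\caL^2\geq\frac1{169}K_{F_0}^2$.

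Next I would apply Proposition \ref{k1} with $\tilde{\beta}=\frac1{13}$. There $\frac2{\tilde{\beta}}+19=45$ and the constant $38$ are both at most $53$, so the bound collapses to the single requirement $\rounddown{\sqrt{8/\caL^2}}+20\leq 53$, equivalently $\caL^2>\frac2{289}$. Reading off the Cartier index $r_X=2340$ from the basket $B_X$, Lemma \ref{integ} makes $r_X\caL^2$ a positive integer, and $\caL^2>\frac2{289}$ amounts to $r_X\caL^2\geq 17$. The two estimates above give $14\leq r_X\caL^2\leq 24$, with $\caL^2\geq\frac2{169}>\frac2{289}$ as soon as $K_{F_0}^2\geq 2$. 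Thus Proposition \ref{k1} settles the theorem except in the extremal case where $F_0$ is a minimal surface with $p_g=K_{F_0}^2=1$ and $r_X\caL^2\in\{14,15,16\}$.

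The crux is this extremal family, in which the raw two-point separation of Lemma \ref{brat} is off by exactly one at $m=53$. Here I would invoke $\varphi_{36}$ via a dichotomy on $|36K_X|$. If $|36K_X|$ is composed of the same pencil $f$, then $P_{36}=8$ forces $a_{36}=7$, whence $\pi^*(K_X)\geq\frac7{36}F$, $\tilde{\beta}=\frac7{43}$ and $\caL^2\geq\frac{49}{1849}$, and Proposition \ref{k1} gives birationality well below $53$. Otherwise $|M_{36}|$ is not composed of $f$, so the restriction of its moving part to a general fibre $F$ is a positive-dimensional base point free system $|G|$ carrying a genuinely moving generic irreducible curve $C$; controlling $\xi=(\caL\cdot C)$ and a usable $\beta$ from $\caL\geq\frac1{13}\sigma^*(K_{F_0})$ and Lemma \ref{L1}, I would check $\alpha_{53}>2$ together with the distinguishing hypotheses (separation of fibres being supplied by Lemma \ref{l1}, as $53\geq 38$), and conclude from Theorem \ref{key}(iii) that $\varphi_{53}$, and hence $\varphi_m$ for every $m\geq 53$, is birational.

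I expect the genuine difficulty to lie in this last step: on a surface with $p_g=K^2=1$ the divisor $\caL$ is pinned extremely close to $\frac1{13}\sigma^*(K_{F_0})$, so the estimates for $\xi$ and $\beta$ attached to the curve cut out by $\varphi_{36}$ leave almost no numerical room. As a fallback I would instead try to rule out $r_X\caL^2\in\{14,15,16\}$ outright, by playing the integrality of $r_X\caL^2$ and of $r_X(\caL\cdot\sigma^*(K_{F_0}))$ against the Hodge index inequality on $F$ and the classification of minimal surfaces with $p_g=K^2=1$.
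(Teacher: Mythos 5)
Your first half tracks the paper: from $P_{24}=3$ and the same-pencil hypothesis you get $24\pi^*(K_X)\geq 2F$, hence $\tilde{\beta}=\frac1{13}$ via Lemma \ref{tau}(2), and your reduction via Lemma \ref{integ} to the case $K_{F_0}^2=1$ with $r_X\caL^2\in\{14,15,16\}$ (the threshold for Proposition \ref{k1} at $m=53$ being $r_X\caL^2\geq 17$) is sound and in the spirit of the paper's Lemma \ref{es}. The gap is in your treatment of this residual case. If you take $C$ to be a generic irreducible element of $\text{Mov}|M_{36}|_F|$, the only $\beta$ you can extract is $\beta=\frac1{36}$ (from $36\caL\geq M_{36}|_F\geq C$), and then in Theorem \ref{key} with $(m_0,a_{m_0})=(18,1)$ one gets
$$\alpha_{53}=\Bigl(53-1-\frac{18}{1}-36\Bigr)\xi=-2\xi<0,$$
nowhere near the required $\alpha_{53}>2$; the inequality $\caL\geq\frac1{13}\sigma^*(K_{F_0})$ gives you no upper bound on $C$ in terms of $\sigma^*(K_{F_0})$, so it cannot improve $\beta$. (Your other branch, $|36K|$ composed of the same pencil, is in fact vacuous: $a_{36}=7$ would force $K_X^3\geq\frac7{36}\caL^2\geq\frac{7}{36}\cdot\frac{49}{1849}>\frac1{1170}$.) So the $\varphi_{36}$ dichotomy does not close the argument at $m=53$.

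What the paper does instead is essentially your ``fallback'': it improves the lower bound on $\caL^2$ itself rather than attempting a direct birationality criterion. Since $F_0$ has $p_g=1$ and one may assume $K_{F_0}^2=1$, the bicanonical system $|\tilde{G}|=|2\sigma^*(K_{F_0})|$ is base point free with even generic irreducible element $C'$, and $\caL\geq\frac1{26}C'$. Applying Theorem \ref{key} with the data $(m_1,a_{m_1},|\tilde G|,C',\beta)=(24,2,|2\sigma^*(K_{F_0})|,C',\frac{2}{13})$ yields $(\caL\cdot C')\geq\frac15$, hence
$$\caL^2\geq\tfrac1{26}(\caL\cdot C')\geq\tfrac1{130},$$
i.e.\ $r_X\caL^2\geq 18\geq 17$, and Proposition \ref{k1} then gives birationality for all $m\geq 53$. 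In short: the missing ingredient is the use of the intrinsic curve family $|2\sigma^*(K_{F_0})|$ on $F$ (with the much better $\beta=\frac2{13}$) in place of the curves cut out by $\varphi_{36}$, whose $\beta=\frac1{36}$ is numerically fatal at $m=53$.
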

\begin{proof} By our assumption, both $\varphi_{18}$ and $\varphi_{24}$ induces the same fibration $f:X'\lrw \bP^1$. 
Since $P_{24}(X)=3$, we may write
$$24\pi^*(K_X)\sim 2F+{E}_{24}'$$
where ${E}_{24}'$ is an effective $\bQ$-divisor. 
By Lemma \ref{tau}(2), we have 
$$\pi^*(K_X)|_F\geq \frac{1}{13}\sigma^*(K_{F_0})$$
which means that we have $\tilde{\beta}=\frac{1}{13}$. 
Clearly we have $\caL^2\geq \frac{1}{13^2}$. 

In fact, one may have a better estimation for $\caL^2$. 
We have $m_1=24$, $a_{m_1}=2$. On the general fiber $F$, set $|\tilde{G}|=|2\sigma^*(K_{F_0})|$, which is base point free by the surface theory. Pick a generic irreducible element $C'$ of $|\tilde{G}|$. Clearly $C'$ is an even divisor on $F$. Replacing $(m_0, a_{m_0},f,F,|G|, C,\beta)$ with $(m_1,a_{m_1},f,F,|\tilde{G}|,C',2\tilde{\beta})$ while applying Theorem \ref{key}, one easily obtains that $(\caL\cdot C')\geq \frac{1}{5}$. Thus 
$$\caL^2=(\pi^*(K_X)|_F)^2\geq \frac{1}{26}(\caL\cdot C')\geq \frac{1}{130}.$$  

By Proposition \ref{k1}, $\varphi_{m,X}$ is birational for all $m\geq 53$. 
\end{proof}

In this case, we can find the constraint for $p_g(F)$. 

\begin{cor}  Under the same assumption as that of Theorem \ref{same},  one has $(K_{F_0}^2, p_g(F_0))=(1,1)$. 
\end{cor}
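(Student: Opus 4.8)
The corollary asks to determine the surface invariants of the general fiber $F$ under the hypothesis that $|18K_X|$ and $|24K_X|$ are composed of the same pencil. The plan is to combine the numerical data in (R1)--(R3) with the inequality $\caL^2\geq \frac{1}{130}$ extracted during the proof of Theorem \ref{same}. Since (R3) already tells us $p_g(F)=1$, the content of the corollary is pinning down $K_{F_0}^2$ to be $1$ (equivalently, ruling out all other minimal surfaces of general type with $p_g=1$, which for $p_g(F_0)=1$ forces $1\leq K_{F_0}^2\leq 9$ by Noether-type bounds, and then excluding $K_{F_0}^2\geq 2$).

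First I would recall the relation $\caL=\pi^*(K_X)|_F$ and $\caL\geq \tilde\beta\,\sigma^*(K_{F_0})$ with $\tilde\beta=\frac{1}{13}$, together with the sharper estimate $\caL^2\geq\frac{1}{130}$ from Theorem \ref{same}. The key is that $\caL^2$ is bounded \emph{above} as well: because $K_X^3=\frac{1}{1170}$ is fixed by (R1), and $24\pi^*(K_X)\sim 2F+E_{24}'$ with $E_{24}'$ effective, a pullback/restriction computation gives an upper bound on $(\pi^*(K_X)|_F)^2$ in terms of $K_X^3$ and the coefficient $a_{24}=2$. Concretely, intersecting $m_0\pi^*(K_X)\equiv a_{m_0}F+E_{m_0}'$ (equation (2.1) with $m_0=18$, $a_{18}=2$) against $\pi^*(K_X)^2$ and using nefness yields $\caL^2\leq\frac{m_0}{a_{m_0}}K_X^3=9\cdot\frac{1}{1170}=\frac{1}{130}$. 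Combined with the lower bound this forces $\caL^2=\frac{1}{130}$ exactly, which is the crucial rigidity.

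With $\caL^2=\frac{1}{130}$ determined, the final step is to translate this into $K_{F_0}^2$. The natural tool is Lemma \ref{tau}(1): with $\mu_0=\frac{K_X^3}{3\caL^2}=\frac{1/1170}{3/130}=\frac{1}{27}$, one gets $\caL\geq(\frac{\mu_0}{\mu_0+1}-\eps)\sigma^*(K_{F_0})=(\frac{1}{28}-\eps)\sigma^*(K_{F_0})$, hence $\caL^2\geq(\frac{1}{28}-\eps)^2 K_{F_0}^2$. Letting $\eps\to 0$ gives $\frac{1}{130}\geq\frac{1}{784}K_{F_0}^2$, i.e.\ $K_{F_0}^2\leq\frac{784}{130}<7$, so $K_{F_0}^2\leq 6$. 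To sharpen this to $K_{F_0}^2=1$ I would refine the inequalities: use instead the relation $24\pi^*(K_X)\sim 2F+E_{24}'$ to improve $\tilde\beta$, and feed the improved slope back through the even-divisor inequality (2.5) of Theorem \ref{key}(ii) applied to $C'\in|2\sigma^*(K_{F_0})|$, exactly as in the proof of Theorem \ref{same}. Since $(\caL\cdot C')=2\tilde\beta K_{F_0}^2$ up to the contraction, matching this against the derived bound $(\caL\cdot C')\geq\frac{1}{5}$ and the equality $\caL^2=\frac{1}{130}$ constrains $K_{F_0}^2$ so tightly that only $K_{F_0}^2=1$ survives; larger values would force $\caL^2>\frac{1}{130}$ or violate $P_{24}(X)=3$.

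The main obstacle I anticipate is establishing the sharp \emph{upper} bound $\caL^2=\frac{1}{130}$ with full rigor: the lower bound $\geq\frac{1}{130}$ is handed to us by Theorem \ref{same}, but the matching upper bound requires carefully controlling the effective divisor $E_{24}'$ (equivalently $E_{18}'$) and verifying that the intersection $(\pi^*(K_X)^2\cdot E')$ contributes nonnegatively, which rests on nefness of $\pi^*(K_X)$ and the precise value $a_{24}=2$ guaranteed by $P_{24}=3$ and the same-pencil hypothesis. Once the equality $\caL^2=\frac{1}{130}$ is secured, the passage to $(K_{F_0}^2,p_g(F_0))=(1,1)$ is a finite check against the short list of surfaces with $p_g=1$, using $\mu_0=\frac{1}{27}$ and Lemma \ref{tau}(1); the residual cases with $2\leq K_{F_0}^2\leq 6$ are eliminated by noting they would make $\sigma^*(K_{F_0})^2$ too large relative to the pinned value of $\caL^2$.
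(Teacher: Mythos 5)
Your proposal has two genuine gaps. First, the claimed rigidity $\caL^2=\frac{1}{130}$ does not hold as you derive it: you take $a_{18}=2$ in the relation $m_0\pi^*(K_X)\equiv a_{m_0}F+E_{m_0}'$, but since $P_{18}=2$ and the pencil is rational over $\Gamma\cong\bP^1$, one has $a_{18}=\deg f_*\OO_{X'}(M_{18})=P_{18}-1=1$. The correct upper bound from nefness is therefore $\caL^2\leq 18K_X^3=\frac{1}{65}$ (or, using $24\pi^*(K_X)\sim 2F+E_{24}'$, $\caL^2\leq 12K_X^3=\frac{2}{195}$), neither of which matches the lower bound $\frac{1}{130}$, so no equality is forced and everything downstream of that ``crucial rigidity'' collapses. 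Second, even granting some pinned value of $\caL^2$, your passage from $K_{F_0}^2\leq 6$ to $K_{F_0}^2=1$ is only sketched: you invoke Lemma \ref{tau}(1), which gives the weak slope $\frac{1}{28}$, and then assert that feeding improved slopes back through the even-divisor inequality ``constrains $K_{F_0}^2$ so tightly that only $K_{F_0}^2=1$ survives.'' That step is never carried out, and the intermediate claim $(\caL\cdot C')=2\tilde\beta K_{F_0}^2$ is at best an inequality, not an equality.

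The paper's argument is a two-line contradiction using exactly the pieces you already have, assembled differently: from $24\pi^*(K_X)\sim 2F+E_{24}'$ and nefness one gets $K_X^3\geq\frac{1}{12}\caL^2$, and from Lemma \ref{tau}(2) (which gives $\tilde\beta=\frac{1}{13}$, much better than the $\frac{1}{28}$ you extract from part (1)) one gets $\caL^2\geq\frac{1}{169}K_{F_0}^2$. If $K_{F_0}^2\geq 2$ this yields $K_X^3\geq\frac{2}{2028}=\frac{1}{1014}>\frac{1}{1170}$, contradicting (R1). No upper bound on $\caL^2$, no case analysis over $2\leq K_{F_0}^2\leq 6$, and no appeal to the classification of surfaces with $p_g=1$ is needed; $p_g(F_0)=1$ is already (R3). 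You should redo the argument along these lines: chain the two lower bounds for $K_X^3$ in terms of $K_{F_0}^2$ and compare with the fixed value $\frac{1}{1170}$.
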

\begin{proof} Suppose $K_{F_0}^2\geq 2$. Then we have 
$\caL^2\geq \frac{2}{13^2}$. This implies 
$$K_X^3\geq \frac{1}{12}\caL^2\geq \frac{1}{1014}>\frac{1}{1170},$$
a contradiction. 
\end{proof}

\subsection{The case when $|18K|$ and $|24K|$ are not composed of the same pencil}

We set $|G|=|M_{24}|_F|$. By our assumption, $|G|$ is base point free. Pick a generic irreducible element $C$ in $|G|$.  Recall that $\xi=(\caL\cdot C)$.  Clearly we may take $\beta=\frac{1}{24}$. 

\begin{lem}\label{es}   One has $\caL^2\geq \frac{1}{195}$. 
\end{lem}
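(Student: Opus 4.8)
The plan is to bound $\caL^2 = (\pi^*(K_X)|_F)^2$ from below by first controlling the intersection number $\xi = (\caL \cdot C)$ along the generic irreducible element $C$ of $|G| = |M_{24}|_F|$, and then relating $\caL^2$ back to $\xi$ via an inequality of the shape $\caL^2 \geq \lambda \, \xi$ for an appropriate rational $\lambda$. First I would record the data entering Theorem \ref{key}: here $m_0 = 18$, $a_{m_0} = a_{18}$, $\beta = \frac{1}{24}$, and $C$ is a generic irreducible element of the base-point-free system $|G|$. Since $|18K|$ and $|24K|$ are not composed of the same pencil, $|M_{24}||_F$ genuinely moves $C$ on $F$, so $C$ is a moving irreducible curve and Lemma \ref{L1}(1) applies to give $\deg(K_C) = (\sigma^*(K_{F_0}) \cdot C) + (\text{correction}) \geq 2$ — more precisely I would use $\deg(K_C) \geq (K_F \cdot C) \geq (\sigma^*(K_{F_0})\cdot C) \geq 2$ together with the fact that $F_0$ is not a $(1,2)$-surface.

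Next I would feed these into Theorem \ref{key}(i), which gives
$$\xi \geq \frac{\deg(K_C)}{1 + m_0/a_{m_0} + 1/\beta} \geq \frac{2}{1 + 18/a_{18} + 24}.$$
The value of $a_{18}$ matters: when $|18K|$ is composed of a pencil $a_{18} = a_{m_0} = \deg f_*\OO_{X'}(M_{18})$, and since $P_{18} = 2$ one has the induced fibration $f$ with $M_{18} \sim F$, giving $a_{18} = 1$. So the denominator is $1 + 18 + 24 = 43$, yielding $\xi \geq \frac{2}{43}$. I would then improve this, if needed, by invoking Theorem \ref{key}(ii): one checks whether $\alpha_m > 0$ (or $>1$) for a suitable $m$ with $m = 24$, using $\alpha_m = (m - 1 - m_0/a_{m_0} - 1/\beta)\xi = (24 - 1 - 18 - 24)\xi$, and applying the refined bound $m\xi \geq \deg(K_C) + \roundup{\alpha_m}$ to bootstrap a sharper value of $\xi$.

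Finally I would convert $\xi$ into $\caL^2$. The standard device is the Hodge-index-type inequality on the surface $F$: since $\caL$ is nef and big and $C$ moves, one has $\caL^2 \cdot (\text{something}) \geq \xi^2$ or, more directly, $\caL^2 \geq \frac{\xi^2}{(\text{self-intersection of }C)}$; alternatively $\caL^2 \geq \beta \xi = \frac{1}{24}\xi$ from the definition of $\beta$ via $\caL - \beta C \geq 0$, which gives $\caL^2 \geq \beta(\caL \cdot C) = \frac{\xi}{24}$. Combining $\xi \geq \frac{2}{43}$ with a slightly sharpened $\xi$ should produce exactly $\caL^2 \geq \frac{1}{195}$, noting $195 = 5 \cdot 39$ suggests the final $\xi$-bound is of the form $\frac{2}{39}$ or similar after a bootstrap, divided by $24$ or combined with $\beta = \frac{1}{24}$ through $\caL^2 \geq \frac{2}{13}\xi$ as in the parallel computation in Theorem \ref{same}.

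The main obstacle will be pinning down the precise numerology so that the chain of inequalities lands on $\frac{1}{195}$ rather than a weaker bound. In particular I expect the delicate point to be determining $a_{18}$ and verifying the parity/evenness hypotheses needed to apply the stronger even-divisor inequality in Theorem \ref{key}(ii), together with choosing the optimal auxiliary $m$ in the bootstrap so that $\roundup{\alpha_m}$ contributes the extra integrality that sharpens $\xi$. Getting $\deg(K_C) \geq 2$ unconditionally (rather than merely $\geq 1$) is essential and relies on ruling out the $(1,2)$-surface case for $F_0$ via the constraints in (R1)--(R3) and the surface-geometric input of Lemma \ref{L1}.
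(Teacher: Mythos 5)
Your proposal has a genuine gap: the chain of inequalities you set up cannot reach $\frac{1}{195}$, and it misses the two ideas that actually drive the paper's proof. Quantitatively, Theorem \ref{key}(i) with $m_0=18$, $a_{18}=1$, $\beta=\frac{1}{24}$ gives only $\xi\geq \frac{2}{43}$, hence $\caL^2\geq \frac{1}{24}\xi\geq \frac{1}{516}$, far short of $\frac{1}{195}$. The bootstrap via Theorem \ref{key}(ii) does not help either: with $m=24$ you compute $\alpha_{24}=(24-1-18-24)\xi=-19\xi<0$, so the hypothesis $\alpha_m>0$ fails, and for large $m$ the inequality $m\xi\geq \deg(K_C)+\roundup{\alpha_m}$ only recovers $\xi\geq\frac{2}{43}$ asymptotically. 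You correctly sense that "the delicate point is pinning down the numerology," but no amount of tuning within your framework lands on $195=2340/12$.

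The paper's proof uses two ingredients absent from your plan. First, an \emph{integrality} argument: by Lemma \ref{integ} the number $r_X\caL^2$ is an integer, and by (R1) the basket forces $r_X=2340$; so it suffices to prove the strict inequality $\caL^2>\frac{11}{2340}$, whence $\caL^2\geq\frac{12}{2340}=\frac{1}{195}$. This is the only reason the denominator $195$ appears at all. Second, the strict bound is obtained by \emph{contradiction via Lemma \ref{tau}(1)} rather than via Theorem \ref{key}(i): assuming $\caL^2\leq\frac{11}{2340}$ makes $\mu_0=\frac{K_X^3}{3\caL^2}\geq\frac{1}{16.5}$ large (here the exact value $K_X^3=\frac{1}{1170}$ from (R1) enters), so $\caL\geq(\frac{1}{17.5}-\varepsilon)\sigma^*(K_{F_0})$, and Lemma \ref{L1}(1) gives $\xi\geq\frac{2}{17.5}$; then $\caL^2\geq\frac{1}{24}\xi\geq\frac{1}{210}>\frac{11}{2340}$, a contradiction. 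In other words, the smaller you assume $\caL^2$ to be, the larger the multiple of $\sigma^*(K_{F_0})$ that $\caL$ dominates, which forces $\caL^2$ back up --- a self-improving mechanism your direct approach does not exploit. Without the integrality step and this $\mu_0$-contradiction, the stated constant is unreachable.
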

\begin{proof}  Assume that 
$(\pi^*(K_X)|_F)^2\leq \frac{11}{2340}$.  Then we have
$$\mu_0=\frac{K_X^3}{3(\pi^*(K_X)|_F)^2}\geq \frac{2340}{33\cdot 1170}=\frac{1}{16.5},$$
which implies, by Lemma \ref{tau} and Lemma \ref{L1}, that
$$\xi=(\pi^*(K_X)|_F\cdot C)\geq \frac{1}{17.5}(\sigma^*(K_{F_0})\cdot C)\geq \frac{2}{17.5}$$
and, on the other hand, 
$$(\pi^*(K_X)|_F)^2\geq \frac{1}{24}\xi\geq \frac{1}{210}>\frac{11}{2340},$$
a contradiction.  Thus we have $\caL^2>\frac{11}{2340}$.

Noting that $r_X\cdot (\pi^*(K_X)|_F)^2$ is integral by Lemma \ref{integ} and $r_X=2340$, we see 
$$\caL^2=(\pi^*(K_X)|_F)^2\geq \frac{12}{2340}=\frac{1}{195}.$$
\end{proof}

We have actually proved the following weaker result:
\begin{cor} Let $X$ be a minimal 3-fold of general type with $\delta(X)=18$.  Then $\varphi_{m,X}$ is birational for all $m\geq 59$. 
\end{cor}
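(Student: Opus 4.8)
The goal is to prove that $\varphi_{m,X}$ is birational for all $m\geq 59$, where $X$ is a minimal 3-fold of general type with $\delta(X)=18$. The plan is to reduce everything to the sufficient condition packaged in Proposition~\ref{k1}, which says that birationality holds for all $m\geq\max\{\rounddown{\sqrt{8/\caL^2}}+20,\ 2/\tilde\beta+19,\ 38\}$ once we have a lower bound $\caL^2$ on the self-intersection of $\caL=\pi^*(K_X)|_F$ and a coefficient $\tilde\beta$ with $\caL\geq\tilde\beta\sigma^*(K_{F_0})$. So the entire argument splits into two cases according to whether $|18K_X|$ and $|24K_X|$ are composed of the same pencil, and in each case I supply these two numerical inputs.

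In the case where the two systems are composed of the same pencil, Theorem~\ref{same} has already done the work: it produces $\caL^2\geq\frac{1}{130}$ and $\tilde\beta=\frac{1}{13}$, yielding birationality for $m\geq 53$, which is comfortably below $59$. So the binding case is the one where $|18K|$ and $|24K|$ are \emph{not} composed of the same pencil. Here Lemma~\ref{es} gives the refined bound $\caL^2\geq\frac{1}{195}$, and for the coefficient I would invoke the Remark after Proposition~\ref{k1}, taking $\tilde\beta=\frac{1}{19}$ via Lemma~\ref{tau}(1) (the generic global estimate $\caL\geq(\frac{\mu_0}{\mu_0+1}-\varepsilon)\sigma^*(K_{F_0})$ always delivers at least $\frac{1}{19}$ in this regime). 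Then I simply plug into Proposition~\ref{k1}.

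The final step is the arithmetic of the maximum. With $\caL^2\geq\frac{1}{195}$ we get $\sqrt{8/\caL^2}\leq\sqrt{1560}<40$, so $\rounddown{\sqrt{8/\caL^2}}+20\leq 59$; with $\tilde\beta=\frac{1}{19}$ we get $2/\tilde\beta+19=38+19=57$; and the third term $38$ is harmless. The maximum is therefore at most $59$, so Proposition~\ref{k1} yields birationality for all $m\geq 59$, completing the proof. Since $\sqrt{1560}\approx 39.5$, the floor term equals $39$ and it is the dominant one — this is the only place where sharpness matters, and it is exactly why the slightly better bound $\caL^2\geq\frac{1}{195}$ of Lemma~\ref{es} (rather than the crude $\frac{1}{13^2}=\frac{1}{169}$) is needed to land at $59$ and not higher.

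I expect no serious obstacle here: this corollary is a direct bookkeeping consequence of the two cases, and all the hard analysis (the volume and intersection estimates, the choice of the auxiliary pencil $|\tilde G|=|2\sigma^*(K_{F_0})|$ giving the even-divisor improvement in Theorem~\ref{key}, and the integrality bound $r_X\caL^2\in\bZ$ from Lemma~\ref{integ}) has already been absorbed into Lemma~\ref{es} and Theorem~\ref{same}. The only thing to be careful about is matching the numerics so that the floor evaluates to $39$ rather than $40$; this is why Lemma~\ref{es} is stated with the specific constant $\frac{1}{195}$, which sits just below the threshold $\frac{8}{40^2}=\frac{1}{200}$ at which the floor would jump.
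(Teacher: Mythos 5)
Your proposal is correct and follows exactly the paper's route: dispose of the same-pencil case via Theorem~\ref{same}, and otherwise feed $\caL^2\geq\frac{1}{195}$ from Lemma~\ref{es} and $\tilde\beta=\frac{1}{19}$ from the Remark after Proposition~\ref{k1} into that proposition, with the arithmetic $\rounddown{\sqrt{1560}}+20=59$ dominating the maximum. (Only your closing aside is slightly off: in the non-pencil case the ``crude'' alternative to $\frac{1}{195}$ is not $\frac{1}{169}$ --- that bound belongs to the same-pencil case --- and $\frac{1}{195}$ lies \emph{above} the threshold $\frac{1}{200}$, which is precisely why the floor stays at $39$; but this does not affect the argument.)
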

\begin{proof} By Theorem \ref{same}, we may assume that $|18K|$ and $|24K|$ are not composed of the same pencil.  The statement directly follows from Proposition \ref{k1} and Lemma \ref{es}. 
\end{proof}

In order to prove the birationality of $\varphi_{57,X}$, we start to study the behavior of $|36K_{X'}|$. Recall that we have 
$|M_{36}|=\text{Mov}|36K_{X'}|$. Consider the natural map
$$H^0(X', M_{36})\overset{\theta_{36}}\lrw V_{36}\subseteq H^0(F, M_{36}|_F)$$
where $V_{36}=\text{Im}(\theta_{36})$. 

%%% v5
\begin{thm}\label{v5} Let $X$ be a minimal 3-fold of general type with $\delta(X)=18$. Assume that $|18K_{X}|$ and $|24K_{X}|$ are not composed of the same pencil. If $\dim_k(V_{36})\geq 5$, then $\varphi_m$ is birational for all $m\geq 57$.
\end{thm}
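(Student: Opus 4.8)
The plan is to exploit the hypothesis $\dim_k(V_{36})\geq 5$ to extract strong positivity of $\caL=\pi^*(K_X)|_F$ on the general fiber $F$, and then feed this into the machinery of Theorem \ref{key}(iii) and Proposition \ref{k1}. Since $M_{36}|_F \geq \mathrm{Mov}|M_{36}|_F|$ and the restriction map $\theta_{36}$ has image $V_{36}$ of dimension at least $5$, the linear subsystem $|V_{36}|$ (viewed inside $|M_{36}|_F|$ on $F$) has projective dimension at least $4$. First I would bound $\caL^2$ from below using this: because $36\pi^*(K_X)|_F\geq M_{36}|_F$ and $M_{36}|_F$ moves in a system of dimension $\geq 4$ on the surface $F$, a generic irreducible element $C$ of its moving part satisfies either that the system is not composed of a pencil (so $(M_{36}|_F)^2$ is large, giving a good bound on $\caL^2$), or it is composed of a pencil with many members. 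In the pencil case the number of distinct members ($\geq 4$) forces $M_{36}|_F \geq 4C$ as $\bQ$-divisors, so $36\caL \geq 4C$, i.e. $\caL \geq \frac{1}{9}C$, and hence $\caL^2 \geq \frac{1}{9}(\caL\cdot C)=\frac{1}{9}\xi$.

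Next I would set up the curve $C$ as the generic irreducible element governing the restricted system and estimate $\xi=(\caL\cdot C)$. Using Lemma \ref{tau} to obtain $\caL\geq \tilde\beta\,\sigma^*(K_{F_0})$ with a value of $\tilde\beta$ coming either from $\mu_0=K_X^3/(3\caL^2)$ or from a direct divisor inequality $\pi^*(K_X)\geq \nu_0 F$, I would then apply Lemma \ref{L1}(1) to get $(\sigma^*(K_{F_0})\cdot C)\geq 2$ and combine with the above to pin down $\xi$ and thereby $\caL^2$ more precisely, invoking the integrality $r_X\caL^2\in\bZ$ from Lemma \ref{integ} (with $r_X=2340$) to round the bound up to the nearest admissible value, exactly as in the proof of Lemma \ref{es}. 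The target is a lower bound on $\caL^2$ (and on $\tilde\beta$) strong enough that the three quantities $\rounddown{\sqrt{8/\caL^2}}+20$, $2/\tilde\beta+19$, and $38$ in Proposition \ref{k1} are all $\leq 57$.

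To complete the argument I would verify the two distinguishing conditions required by Theorem \ref{key}(iii) for $m=57$. The first, that $|57K_{X'}|$ distinguishes different generic irreducible elements of $|M_{18}|$, is immediate from Lemma \ref{l1} since $57\geq 38$. The second, that on $F$ the restricted system $|57K_{X'}||_F$ distinguishes different generic irreducible elements of $|G|$, is where the hypothesis $\dim_k V_{36}\geq 5$ does its essential work: the large image $V_{36}$ guarantees that the moving part of the restricted $36$-canonical system separates the relevant curves on $F$, and I would combine this with $57 = 36 + 21$ together with Kawamata--Viehweg vanishing (as in the proof of Proposition \ref{k1}, writing $|K_{X'}+\roundup{n\pi^*(K_X)}+F||_F \lsgeq |K_F+\roundup{n\caL}|$) to lift the separation to the full canonical level. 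Finally, with $\alpha_m=(m-1-\frac{m_0}{a_{m_0}}-\frac{1}{\beta})\xi>2$ verified for $m=57$ using the estimates on $\xi$ and $\beta=\frac{1}{24}$, Theorem \ref{key}(iii) yields birationality of $\varphi_{57,X}$, and Proposition \ref{k1} handles all $m\geq 57$.

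The main obstacle I anticipate is the second distinguishing condition: translating the dimension bound $\dim_k V_{36}\geq 5$ into genuine separation of generic irreducible elements of $|G|=|M_{24}|_F|$ on the fiber $F$. The hypothesis controls the restricted $36$-canonical sections, whereas the curves to be separated are defined via the $24$-canonical system, so I would need to carefully relate $\mathrm{Mov}|M_{36}|_F|$ to $|G|$ — most delicately to rule out the degenerate case where the restricted $36$-system, despite having dimension $\geq 4$, is composed of a pencil that fails to distinguish the $C$'s. Handling this pencil-versus-non-pencil dichotomy for $V_{36}$, and ensuring the $\caL^2$ bound survives in the worst case, is the crux of the proof.
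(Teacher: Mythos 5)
There is a genuine gap, and it sits exactly where you need the hypothesis $\dim_k(V_{36})\geq 5$ to pay off: your dichotomy on $|M_{36}|_F|$ \emph{by itself} does not yield a strong enough lower bound on $\caL^2$ in the non-pencil case. To run Proposition \ref{k1} at $m=57$ you need $\rounddown{\sqrt{8/\caL^2}}\leq 37$, i.e.\ $\caL^2>\frac{8}{38^2}=\frac{1}{180.5}$. Your pencil branch is fine ($36\caL\geq 4C'$ gives $\caL^2\geq\frac{1}{9}\cdot\frac{2}{19}=\frac{2}{171}$), but in the non-pencil branch the generic estimates available give only $\caL^2\geq\frac{1}{36}(\caL\cdot G_{36})\geq\frac{1}{36}\cdot\frac{2}{19}=\frac{1}{342}$, or, if you restrict $G_{36}$ to its own generic member and apply Clifford, $\caL^2\geq\frac{(G_{36})^2}{36^2}\geq\frac{5}{1296}\approx\frac{1}{259}$; even after invoking $r_X\caL^2\in\bZ$ with $r_X=2340$ neither clears $\frac{1}{180.5}$. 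The missing idea in the paper's proof is to pair the $36$-system with the $24$-system: restrict $G_{36}=M_{36}|_F$ to the curve $C$ coming from $|M_{24}|_F|$ and split on $\dim W_{36}$, where $W_{36}$ is the image of $H^0(F,G_{36})\to H^0(C,G_{36}|_C)$. If $\dim W_{36}\geq 4$, Clifford/Riemann--Roch on $C$ gives $\deg(G_{36}|_C)\geq 5$, hence $\caL^2\geq\frac{1}{36\cdot 24}(M_{36}|_F\cdot M_{24}|_F)\geq\frac{5}{864}=\frac{1}{172.8}$ --- the denominator $36\cdot 24$ rather than $36^2$ is what saves the estimate. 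If $\dim W_{36}\leq 3$, then $h^0(F,G_{36}-C)\geq 2$, so $G_{36}\geq C+C''$ with both curves moving, and Lemma \ref{L1} plus Lemma \ref{tau} give $\caL^2\geq\frac{1}{36}\cdot\frac{4}{19}=\frac{1}{171}$. Both bounds exceed $\frac{1}{180.5}$ and Proposition \ref{k1} finishes.

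Your fallback through Theorem \ref{key}(iii) also does not close. With $m_0=18$, $a_{18}=1$ and $\beta=\frac{1}{24}$ one has $\alpha_{57}=(57-1-18-24)\xi=14\xi$, so you would need $\xi>\frac{1}{7}$, whereas the available estimate is $\xi\geq\frac{1}{19}(\sigma^*(K_{F_0})\cdot C)\geq\frac{2}{19}<\frac{1}{7}$, and the iteration in Theorem \ref{key}(ii) does not improve this past the threshold. Relatedly, the ``second distinguishing condition'' you identify as the crux is a non-issue in the paper's argument: the proof never invokes Theorem \ref{key}(iii) here, but works entirely at the surface level through Lemma \ref{brat} via Proposition \ref{k1}, where no separation of curves in $|G|$ is required.
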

\begin{proof}   Clearly we have $h^0(F, M_{36}|_F)\geq 5$. Set $|G_{36}|=|M_{36}|_F|$. 
By our assumption, $|G_{36}|$ is base point free.  %Pick a generic irreducible element $C_{36}$ of $|G_{36}|$.  
Recall that we have already another curve family $|C|$ on $F$. 

We consider the natural restriction map
$$H^0(F, G_{36})\lrw W_{36}\subset H^0(C, D_{36})$$
where $W_{36}$ is the image linear space and $D_{36}=G_{36}|_{C}$.
\medskip

{\bf Case 1}.  If $\dim(W_{36})\geq 4$, we have $h^0(C, D_{36})\geq 4$.
By Riemann-Roch formula and the Clifford theorem, one easily knows that $\deg(D_{36})\geq 5$ since $g(C)\geq 2$.  So
$$\caL^2=(\pi^*(K_X)|_F)^2\geq \frac{1}{36\cdot 24} (M_{36}|_F\cdot M_{24}|_F)\geq \frac{5}{36\cdot 24}=\frac{1}{172.8}.$$
\medskip

{\bf Case 2}.  If $\dim(W_{36})\leq 3$, we have $h^0(F, G_{36}-C)\geq 2$.  Pick up a generic irreducible element $C''$ in $\text{Mov}|G_{36}-C|$. One has $G_{36}\geq C+C''$. 
Then, by Lemma \ref{L1} and Lemma \ref{tau}, one has 
$$\big(\pi^*(K_X)|_F\cdot G_{36}\big)\geq \frac{1}{19}\Big(\sigma^*(K_{F_0})\cdot (C+C'')\Big)\geq \frac{4}{19}.$$
Thus one has
$$\caL^2=(\pi^*(K_X)|_F)^2\geq \frac{1}{36} \big(\pi^*(K_X)|_F\cdot G_{36}\big)\geq \frac{1}{171}.$$

By Proposition \ref{k1}, $\varphi_{m,X}$ is birational for all $m\geq 57$. 
\end{proof}

%%% v4
\begin{thm}\label{v4} Let $X$ be a minimal 3-fold of general type with $\delta(X)=18$. Assume that $|18K_{X'}|$ and $|24K_{X'}|$ are not composed of the same pencil. If $\dim(V_{36})\leq 4$, then $\varphi_m$ is birational for all $m\geq 57$.
\end{thm}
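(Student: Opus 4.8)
The plan is to reduce everything to the single numerical estimate $\caL^2\geq\frac{1}{180}$. Indeed, since $M_{18}\sim F$ forces $\pi^*(K_X)\geq\frac{1}{18}F$, Lemma \ref{tau}(2) always furnishes the baseline $\caL\geq\frac{1}{19}\sigma^*(K_{F_0})$, i.e. $\tilde\beta=\frac{1}{19}$, so the second and third entries in the bound of Proposition \ref{k1} are already at most $57$; the only thing left to secure is $\rounddown{\sqrt{\frac{8}{\caL^2}}}+20\leq 57$, which amounts to $\caL^2\geq\frac{1}{180}$. Thus the whole theorem comes down to improving the general bound $\caL^2\geq\frac{1}{195}$ of Lemma \ref{es} (which only yields $m\geq 59$) to $\caL^2\geq\frac{1}{180}$ under the hypothesis $\dim(V_{36})\leq 4$.

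First I would exploit the smallness of $V_{36}$ to peel fibers off $|M_{36}|$. By (R2) we have $h^0(X',M_{36})=P_{36}=8$, and restricting to a general fiber (using $(M_{36}-jF)|_F\cong M_{36}|_F$) the exact sequences
\[
0\to H^0(X',M_{36}-jF)\to H^0(X',M_{36}-(j-1)F)\to H^0(F,M_{36}|_F)
\]
have right-hand image of dimension at most $\dim(V_{36})$. Summing over $j$ gives $h^0(X',M_{36}-tF)\geq 8-t\dim(V_{36})$, so $M_{36}\geq tF$ whenever $t\dim(V_{36})\leq 7$; thus $t$ may be taken to be $7,3,2,1$ according as $\dim(V_{36})=1,2,3,4$. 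Via $36\pi^*(K_X)\geq M_{36}\geq tF$ and Lemma \ref{tau}(2), this upgrades the constant to $\tilde\beta=\frac{t/36}{t/36+1}$. When $\dim(V_{36})\leq 2$ we get $t\geq 3$, hence $\tilde\beta\geq\frac{1}{13}$, and since $K_{F_0}^2\geq 1$ one obtains $\caL^2\geq\tilde\beta^2K_{F_0}^2\geq\frac{1}{169}>\frac{1}{180}$; Proposition \ref{k1} closes these cases at once.

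The delicate range is $\dim(V_{36})\in\{3,4\}$, where peeling gives no improvement on the baseline $\tilde\beta=\frac{1}{19}$ (indeed $M_{36}\geq 2F$ holds automatically, since $M_{18}\sim F$ forces $M_{36}\geq 2M_{18}$). Here I would run a dichotomy in the spirit of Theorem \ref{v5}, working with the base point free system $G_{36}=M_{36}|_F$ and the moving curve $C\in|M_{24}|_F|$, and restricting to $C$ via $W=\text{Im}\big(H^0(F,G_{36})\to H^0(C,G_{36}|_C)\big)$. If $\dim W\geq 4$, then Clifford on $C$ (with $g(C)\geq 2$) forces $(G_{36}\cdot C)\geq 5$, whence $\caL^2\geq\frac{1}{864}(M_{36}|_F\cdot M_{24}|_F)\geq\frac{5}{864}>\frac{1}{180}$. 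If instead the restriction to $C$ is degenerate, I would extract a decomposition $G_{36}\geq C+C''$ with $C''$ moving and bound $(\caL\cdot G_{36})\geq\frac{1}{19}\big(\sigma^*(K_{F_0})\cdot(C+C'')\big)\geq\frac{4}{19}$ by Lemma \ref{L1}, giving $\caL^2\geq\frac{1}{36}\cdot\frac{4}{19}=\frac{1}{171}>\frac{1}{180}$, exactly as in Case 2 of Theorem \ref{v5}.

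The crux is to guarantee one of these two favourable alternatives. The obstruction is the genuinely degenerate configuration in which $\dim W\leq 3$ yet $h^0(F,G_{36}-C)\leq 1$, so that the second curve $C''$ fails to move and the crude estimate $(\caL\cdot G_{36})\geq(\caL\cdot C)\geq\frac{2}{19}$ recovers only $\caL^2\geq\frac{1}{342}$—well past the old $\frac{1}{195}$ of Lemma \ref{es}, but short of the required $\frac{1}{180}$. Overcoming this is where I expect the simultaneous use of $\varphi_{18}$, $\varphi_{24}$ and $\varphi_{36}$ to become essential: one should compare the \emph{complete} restricted systems $|M_{24}|_F|$ and $|M_{36}|_F|$ on $F$ (whose dimensions $h^0(F,M_{36}|_F)$ need not be as small as $\dim(V_{36})$) and exploit the position of the web $\bP(V_{36})$ relative to $C$, so as to force either a larger $\dim W$ or a genuinely moving second curve. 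I anticipate this last step—bridging the narrow gap between $\frac{1}{195}$ and $\frac{1}{180}$ in the case $\dim(V_{36})=4$—to be the main technical obstacle.
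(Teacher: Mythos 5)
Your reduction to the single inequality $\caL^2\geq \frac{1}{180}$ is correctly set up (with $\tilde\beta=\frac{1}{19}$ the other two entries in Proposition \ref{k1} are indeed $\leq 57$), and your treatment of $\dim(V_{36})\leq 2$ and of the subcases $\dim W\geq 4$ and $h^0(F,G_{36}-C)\geq 2$ is sound. But the proposal does not prove the theorem: as you yourself concede, in the essential range $\dim(V_{36})\in\{3,4\}$ the degenerate configuration ($\dim W\leq 3$ and $G_{36}-C$ not moving on $F$) is left open, and the crude bound $\caL^2\geq\frac{1}{342}$ you obtain there is weaker even than Lemma \ref{es}. That is a genuine gap, not a technicality, and the idea needed to close it is exactly what you are missing: the hypothesis $\dim(V_{36})\leq 4$ should be spent \emph{upstairs on $X'$}, not on $F$. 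It gives $h^0(X',M_{36}-F)\geq 8-4=4$, hence a moving system $|M_{36,-1}|=\text{Mov}|M_{36}-F|$ of projective dimension $\geq 3$ --- much more than the trivial $M_{36}\geq 2F$ you record.

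With this system in hand the paper splits into cases. If $|M_{36,-1}|$ is composed of a (necessarily rational, since $q=0$) pencil $|F_1|$, then $36\pi^*(K_X)\geq F+3F_1$; when $|F_1|=|F|$ this yields $36\pi^*(K_X)\geq 4F$, so $\tilde\beta=\frac{1}{10}$ and $\caL^2\geq\frac{1}{100}$, and Proposition \ref{k1} finishes. In the remaining situations (the pencil $|F_1|$ differs from $|F|$, or $|M_{36,-1}|$ is not a pencil with generic member $S_{-1}$) one does \emph{not} try to improve $\caL^2$ at all: one adds the new divisor to the adjoint system, considering $|K_{X'}+\roundup{s\pi^*(K_X)}+F+F_1|\lsleq|(s+37)K_{X'}|$ (resp.\ with $S_{-1}$ in place of $F_1$), restricts by Kawamata--Viehweg vanishing first to $F$ and then to a generic irreducible element $C_1$ of $|F_1|_F|$ (resp.\ $C_{-1}$ of $|S_{-1}|_F|$), and uses $(\caL\cdot C_1)\geq\frac{2}{19}$ to get $\deg(D_1)>2$ for $s\geq 20$, i.e.\ birationality for $m=s+37\geq 57$. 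This bypasses Proposition \ref{k1} and the threshold $\frac{1}{180}$ entirely, which is precisely why the narrow gap between $\frac{1}{195}$ and $\frac{1}{180}$ that blocks your argument never arises in the paper.
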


\begin{proof}  Since $h^0(X', M_{36})=P_{36}(X)=8$, we have $h^0(X', M_{36}-F)\geq 4$. 
We may modify our previous $\pi$  so that $\text{Mov}|M_{36}-F|$ is also base point free.  Set $|M_{36,-1}|=\text{Mov}|M_{36}-F|$. 
\medskip

{\bf Case I}.  Assume that $|M_{36,-1}|$ is composed of a pencil of surfaces. Noting that $q(X)=0$, such a pencil $|M_{36,-1}|$ must be over the rational curve (namely, a rational pencil). So 
$$36\pi^*(K_X)-F\geq M_{36,-1}\geq 3F_1$$
where $F_1$ is a generic irreducible element of $|M_{36,-1}|$.

{\bf Subcase (I-1)}. Suppose that both $|F|$ and $|F_1|$ are the same pencil, we have $36\pi^*(K_X)\geq 4F$ and, by Lemma \ref{tau},  the inequality
$$\pi^*(K_X)|_F\geq  \frac{1}{10}\sigma^*(K_{F_0}). $$ 
Thus 
$$\caL^2=(\pi^*(K_X)|_F)^2\geq  \frac{1}{100}.$$
Proposition \ref{k1}  implies that $\varphi_{m}$ is birational for all $m\geq 48$.

{\bf Subcase (I-2)}.  If  $|F|$ and $|F_1|$ are not the same pencil, then $|F_1|_F|$ is moving and we pick a generic irreducible element $C_1$ in $|F_1|_F|$.  We are going to study the birationality of $\varphi_{m,X}$ directly. We consider the following linear system on $X'$: 
$$|K_{X'}+\roundup{s\pi^*(K_X)}+F+F_1|\lsleq|(s+37)K_{X'}|$$
where $s\in \bZ_{>0}$.  By Kawamata-Viehweg vanishing theorem, we have 
\begin{eqnarray}
&&|K_{X'}+\roundup{s\pi^*(K_X)}+F+F_1||_F\notag\\
&=& |K_F+\roundup{s\pi^*(K_X)}|_F+F_1|_F|\notag\\
&\lsgeq&|K_F+\roundup{s\pi^*(K_X)|_F}+C_1|.\label{1}
\end{eqnarray}
Whenever $s\geq 19$, $(K_{X'}+\roundup{s\pi^*(K_X)})|_F$ is effective.  So 
$$|K_{X'}+\roundup{s\pi^*(K_X)}+F+F_1||_F$$
can distinguish different generic irreducible elements of $|F_1|_F|$.  Applying the vanishing theorem once more, we have
\begin{eqnarray*}
&&|K_F+\roundup{s\pi^*(K_X)|_F}+C_1||_{C_1}\\
&=&|K_{C_1}+\roundup{s\pi^*(K_X)|_F}|_{C_1}|\\
&=&|K_{C_1}+D_1|.\end{eqnarray*}
Noting that 
$$(\pi^*(K_X)|_F\cdot C_1)\geq \frac{2}{19},$$
we have 
$$\deg(D_1)\geq s\big(\pi^*(K_X)|_F\cdot C_1\big)>2$$
whenever $s\geq 20$. 
By Lemma \ref{l1}, we have seen that $\varphi_m$ is birational for all $m\geq 57$. 
\medskip

{\bf Case II}.  Assume that $|M_{36,-1}|$ is not composed of a pencil of surfaces. Pick a generic irreducible element $S_{-1}\in |M_{36,-1}|$. We have
$$36\pi^*(K_X)\geq F+S_{-1}.$$
Set $|G_{-1}|=|S_{-1}|_F|$.  We use the parallel argument to that for Subcase (I-2). 
In fact, we have 
$$|K_{X'}+\roundup{s\pi^*(K_X)}+F+S_{-1}|\lsleq |(s+37)K_{X'}|$$
for any  $s\in \bZ_{>0}$.  By Kawamata-Viehweg vanishing theorem, we have 
\begin{eqnarray}
&&|K_{X'}+\roundup{s\pi^*(K_X)}+F+S_{-1}||_F\notag\\
&=& |K_F+\roundup{s\pi^*(K_X)}|_F+S_{-1}|_F|\notag\\
&\lsgeq&|K_F+\roundup{s\pi^*(K_X)|_F}+G_{-1}|.\label{2}
\end{eqnarray}
No matter whether $|G_{-1}|$ is composed of a pencil or not,  $|K_{X'}+\roundup{s\pi^*(K_X)}+F+S_{-1}||_F$ can distinguish different generic irreducible elements of $|G_{-1}|$ whenever $s\geq 20$ (which is an easy exercise as an application of Kawamata-Viehweg vanishing theorem!). 

{}Finally, we pick a generic irreducible element $C_{-1}$ of $|G_{-1}|$.  Applying the vanishing theorem once more, we have
\begin{eqnarray*}
&&|K_F+\roundup{s\pi^*(K_X)|_F}+G_{-1}||_{C_{-1}}\\
&=&|K_{C_{-1}}+\roundup{s\pi^*(K_X)|_F}|_{C_{-1}}+(G_{-1}-C_{-1})|_{C_{-1}}|\\
&\lsgeq&|K_{C_{-1}}+D_{-1}|\end{eqnarray*}
where 
$$D_{-1}=\roundup{s\pi^*(K_X)|_F}|_{C_{-1}}$$
since  $(G_{-1}-C_{-1})|_{C_{-1}})\geq 0$. Noting that, by Lemma \ref{L1},  
$$\big(\pi^*(K_X)|_F\cdot C_{-1}\big)\geq \frac{2}{19},$$
we have 
$$\deg(D_{-1})\geq s\Big(\pi^*(K_X)|_F\cdot C_{-1}\Big)>2$$
whenever $s\geq 20$. 
By Lemma \ref{l1}, we have seen that $\varphi_m$ is birational for all $m\geq 57$. 
\end{proof}

\subsection{Proof of Theorem \ref{57}}
\begin{proof} Theorem \ref{57} follows directly from Theorem \ref{same}, Theorem \ref{v5} and Theorem \ref{v4}. \end{proof}

\subsection{Proof of Corollary \ref{cc}}
\begin{proof} Recall the following: 
\begin{thm}\label{15} (see Chen--Chen \cite[Theorem 6.2]{EXPIII})  Let $X$ be a minimal 3-fold of general type with $\delta(X)\leq 15$. Then $\varphi_m$ is birational for all $m\geq 56$.
\end{thm}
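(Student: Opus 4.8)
The plan is to run exactly the fibration-and-restriction machinery of Section 2 (Theorem \ref{key}, Lemma \ref{tau}, Proposition \ref{k1}) with the more favorable value $m_0=\delta(X)\le 15$ in place of $18$. First I would set $m_0=\delta(X)$, so that $P_{m_0}(X)\ge 2$ with $m_0\le 15$, and form the induced fibration $f_{m_0}\colon X'\lrw\Gamma$ from $\varphi_{m_0}$ as in \ref{m0}. The argument then splits according to $d_{m_0}=\dim\Gamma$.

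When $d_{m_0}\ge 2$ the image $\overline{\varphi_{m_0}(X)}$ is already a surface or a threefold, so $\varphi_{m_0}$ separates a family of subvarieties of dimension $\ge 2$; combining this with Kawamata--Viehweg vanishing and the rapid growth of $P_m$ forces $\varphi_m$ to be birational well below the threshold $56$, and these cases are comparatively soft.

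The main case is $d_{m_0}=1$, where the generic fiber $F$ of $f_{m_0}$ is a smooth surface of general type and $\caL=\pi^*(K_X)|_F$ is nef and big. Here I would bound $\caL^2$ and $\xi=(\caL\cdot C)$ from below. Since $m_0\pi^*(K_X)\ge a_{m_0}F$, Lemma \ref{tau}(2) gives $\caL\ge \tilde\beta\,\sigma^*(K_{F_0})$ with $\tilde\beta=\frac{a_{m_0}}{a_{m_0}+m_0}\ge\frac{1}{16}$, and Theorem \ref{key}(i)(ii) together with Lemma \ref{L1} converts this into explicit lower bounds for $\xi$ and hence for $\caL^2$, case by case for $m_0\in\{2,\dots,15\}$. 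With these in hand I would apply the analogue of Proposition \ref{k1}, in which the constant $19$ is replaced by $m_0+1$ throughout the comparison $|K_{X'}+\roundup{n\pi^*(K_X)}+F|\lsleq|(n+m_0+1)K_{X'}|$: this yields birationality of $\varphi_m$ once
$$m\ge\max\Big\{\rounddown{\sqrt{\tfrac{8}{\caL^2}}}+m_0+2,\ \tfrac{2}{\tilde\beta}+m_0+1,\ \text{(fiber-separation bound)}\Big\},$$
and I would check that this maximum stays at or below $56$ for every admissible $m_0$, using the distinguishing-of-fibers step (analogue of Lemma \ref{l1}) to control the last term; note that the second term is already $\le 48$ since $\tilde\beta\ge\frac1{16}$.

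The hard part will be the extremal subcases where $\caL^2$ and $\xi$ are smallest --- typically $m_0=15$ (the largest admissible value) with $F_0$ of minimal volume and $p_g(F)$ small --- since there the first term of the maximum is largest and closest to $56$, forcing the requirement $\caL^2>\tfrac{1}{200}$ or so. To push through I would sharpen the estimate of $\caL^2$ exactly as in the $\delta=18$ analysis: invoke the integrality of $r_X(\pi^*(K_X)|_F)^2$ (Lemma \ref{integ}) to round $\caL^2$ up to the nearest admissible value, and, where that is not enough, run the finer restriction-to-curve and Clifford-theorem arguments (as in the proofs of Theorem \ref{v5} and Theorem \ref{v4}) to gain the extra increment in $\caL^2$ needed to bring the threshold safely under $56$.
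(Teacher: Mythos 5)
The first thing to note is that the paper contains no proof of this statement at all: Theorem \ref{15} is quoted verbatim from Chen--Chen \cite[Theorem 6.2]{EXPIII} inside the proof of Corollary \ref{cc}, and the whole point of the present paper is the complementary case $\delta(X)=18$, which that reference only settles up to $m\geq 61$. So your proposal is not a reconstruction of the paper's argument but an attempt to recompress the long case-by-case analysis of \cite{EXPIII} into the Section 3 machinery, and as sketched it has genuine gaps precisely at the points where that machinery secretly uses $\delta(X)=18$.

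Concretely: (a) Proposition \ref{k1} rests on Lemma \ref{l2}, whose proof invokes Lemma \ref{L1} together with ``the fact that $F_0$ is not a $(1,2)$ surface''; this is available for $\delta=18$ because (R3) gives $p_g(F)=1$, but for $\delta(X)\leq 15$ the general fiber may a priori be a $(1,2)$ surface, where $(\sigma^*(K_{F_0})\cdot \tilde{C})\geq 2$ fails, the constant $4$ in Lemma \ref{brat} is not reached, and the entire birationality criterion you rely on breaks down --- handling exactly these fibers is a substantial part of \cite{EXPII,EXPIII} and cannot be absorbed into your uniform maximum. (b) Your rescue of the extremal subcase $m_0=15$ via Lemma \ref{integ} is vacuous: rounding $\caL^2$ up to the next multiple of $1/r_X$ presupposes knowing $r_X$, and the value $r_X=2340$ used in Lemma \ref{es} is read off from the unique basket $B_X$ in (R1), which exists only because $\delta(X)=18$ forces $K_X^3=\frac{1}{1170}$ and that basket; for $\delta\leq 15$ neither $B_X$ nor $r_X$ nor $K_X^3$ is determined, so there is no ``nearest admissible value'' to round to, while the soft bound $\caL^2\geq \tilde{\beta}^2 K_{F_0}^2\geq \frac{1}{256}$ (for $m_0=15$, $a_{m_0}=1$, $K_{F_0}^2=1$) falls short of the $\caL^2>\frac{1}{200}$ you yourself identify as necessary. (c) Lemma \ref{tau}(2) and the fiber-separation Lemma \ref{l1} are stated and used for $f:X'\lrw \bP^1$, i.e., they use $q(X)=0$, again a consequence of (R1); for $\delta\leq 15$ the base $\Gamma$ may have positive genus and needs separate (if easier) treatment, and your separation bound also needs nonvanishing of $P_{m-m_0}$, which is itself a theorem of \cite{EXPIII} rather than a freebie. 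Finally, the refined arguments of Theorems \ref{v5} and \ref{v4} that you propose to imitate consume the specific values $P_{24}=3$ and $P_{36}=8$ from (R2), which have no analogue here. The correct reading of Theorem \ref{15} is as an external input, which is why the paper cites it rather than proves it; a self-contained proof along your lines would essentially have to redo Section 6 of \cite{EXPIII}.
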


Now Corollary \ref{cc}  follows directly from Theorem \ref{15},  \cite[Theorem 1.4(2)]{EXPIII} ($\delta(X)\neq 16$,$17$) and Theorem \ref{57}. \end{proof}

\subsection{Open question} It is natural and interesting to ask: is $r_3\leq 57$ optimal?

\vskip 0.8cm
\noindent{\bf Acknowledgment}.  The author was supported by National Natural Science Foundation of China (\#11571076,\#11231003, \#11421061) and Program of Shanghai Academic Researcher Leader (Grant no. 16XD1400400).  During the preparation of this paper, the author benefited a lot from discussions with Jungkai Chen and Yongnam Lee. The author would like to thank NCTS (Taipei Office) and KAIST for hosting his visits in 2014 and 2015.  Finally the author appreciates the helpful suggestion of a referee.

\end{document}